\journal{xx}
\newtheorem{theorem}{Theorem}[section]
\newtheorem{proposition}[theorem]{Proposition}
\newtheorem{lemma}[theorem]{Lemma}
\newtheorem{corollary}[theorem]{Corollary}
\newtheorem{remark}[theorem]{Remark}
\newcommand{\Z}{\ensuremath{\mathbb{Z}}}
\newcommand{\fhe}[0]{\ensuremath{{\scriptstyle\circ}}}
\newcommand{\Pn}{\ensuremath{P^{2k}(n)}}
\newcommand{\lr}[1]{\ensuremath{\left \langle #1  \right \rangle }}
\begin{document}

\begin{frontmatter}

\title{ Complexes equivalent to $S^{2k-1}$-fibrations over $S^{2k}$}

\author[1]{Zhongjian Zhu}
\ead{20160118@wzu.edu.cn}
\author[2]{Jianzhong Pan\corref{cor}}
\ead{pjz@amss.ac.cn}
\cortext[cor]{Corresponding author}
\address[1]{College of Mathematics and Physics, Wenzhou University, Wenzhou 325035, China}
\address[2]{Hua Loo-Keng Key Mathematical Laboratory, Institute of Mathematics,\\ Academy of Mathematics and Systems Science,
	Chinese Academy of Sciences; \\University of Chinese Academy of Sciences, Beijing,  100190, China}

\begin{abstract}
In this paper,  necessary and sufficient conditions are obtained for the attaching map $f$ of the top cell  of a CW complex  to have the homotopy type of the total space of $S^{2k-1}$-fibration over $S^{2k}$ for any $k\geq 2$. As an application, the order of  any attaching map of the top cell  of the total space of an $S^{2k-1}$-fibration over $S^{2k}$ is determined and when $k\ne 2,4$, the homotopy types of the total spaces of $S^{2k-1}$-fibrations over $S^{2k}$ are classified by the stable homotopy classes of the attaching maps. 
\end{abstract}

\begin{keyword}
fibration\sep homotopy type\sep spectral sequence\sep Moore space

\MSC 55R15\sep55R20\sep55P15\sep 55Q52
\end{keyword}
\end{frontmatter}

\section{Introduction}
\label{intro}

Sphere bundles over spheres are of great significance in geometric topology. 
In 1956, John Milnor \cite{Mil56} discovered the first example of so-called “exotic” spheres that were homeomorphic to
the 7-sphere but not diffeomorphic to it with its standard smooth structure.
Since Milnor's discovery of exotic spheres, there has been a sustained interest in whether exotic spheres admit Riemannian metrics with positive curvature.  Grove and Ziller  showed that the total spaces of those $S^3$-bundles over $S^4$ in Milnor's construction admit  metrics with non-negative sectional curvature \cite{Grove}

On the other hand, the unit tangent  sphere bundle is diffeomorphic to the Stiefel manifold, i.e., 
\[
STS^{2k} \cong V_{2k+1,2}.
\] 
It is an important tool for studying 2-frames on smooth manifolds \cite{Thomas}.

It is well known that the oriented preserving isomorphism classes of fiber bundles $X$ (We will not make a distinction between sphere bundles and their corresponding total spaces) with structure $SO(q+1)$ over \( S^r \) with fiber \( S^q \) (It will be abbreviated to sphere bundle in the subsequent text) correspond to the homotopy classes of maps \(\chi: S^{r-1} \to SO(q+1) \).  

At first glance, such a classification problem for isomorphisms receives a perfect homotopy-theoretic solution. However, some other related problems are not thereby resolved, e.g, the classification problem for the total spaces of these fiber bundles. Considering that non-isomorphic fiber bundles can have homeomorphic total spaces, the classification of total spaces is non-trivial. James and Whitehead initiated homotopy classification of sphere bundles over spheres \cite{JamesI,JamesII,James On sph bundle}. When $r-q>1$ or for sphere bundles with sections, the homotopy classification problems was relatively simple and solved by James and Whitehead \cite{JamesI,JamesII}. When $r=2k,q=2k-1,k\ne 2,4$ and the sphere bundles have no sections, a complete homotopy classification was obtained by James \cite{James On sph bundle}. 

To explain the difficulties when $k=2,4$, let's sketch briefly his argument.

For an $S^{2k-1}$-bundle $X$ over $S^{2k}$ without any section, the total space $X\simeq P^{2k}(n)\cup_f e^{4k-1}$ where $ P^{2k}(n)=S^{2k-1}\cup_n e^{2k}$ and $n$ is given by the Euler class of the bundle. 
If \(\chi: S^{2k-1} \to SO(2k) \) is the clutching function of the bundle $X$, then $J$-homomorphism acting on $\chi$ gives a map $\mu:S^{4k-1}\to S^{2k}$. The attaching map $f: S^{4k-2} \to  P^{2k}(n) $ and $\mu:S^{4k-1}\to S^{2k}$ are related by the following equation:
\begin{align}
	\Sigma f=i_{2k} \fhe \mu . \label{mu}
\end{align}
where $\Sigma $ is the suspension functor and $i_{2k}:S^{2k}\rightarrow \Pn$ is the canonical inclusion.

The homotopy class of $\mu$ was the invariant to classify, up to homotopy, the total space $X$.
To obtain such a homotopy classification, two results are essential for his argument to work.

The first is an identity 
\begin{align}
	j_\ast(f)=\pm [X_{2k},\iota_{2k-1}]_r   . \label{relative}
\end{align}
where $X_{2k}$ represents the characteristic map of the top cell of $ P^{2k}(n)$, $\iota_{2k-1}$ is the identity map over $S^{2k-1}$, $[ , ]_r$ is the relative Whitehead product and $j_\ast$ is induced by the inclusion $( P^{2k}(n),\ast)\hookrightarrow ( P^{2k}(n), S^{2k-1})$.

The second is the following result of homotopy nature: when $k\ne 2,4$, for an element $\xi \in \pi_{4k-2}(S^{2k-1})$ with $\Sigma (i_{2k-1}\fhe \xi)=0$, there exists another element $\xi' \in \pi_{4k-2}(S^{2k-1})$ such that $\Sigma \xi'=0$ and $i_{2k-1}\fhe \xi=i_{2k-1}\fhe \xi'$.
When $k=2,4$, this is impossible since the kernel of the homomorphism $i_{2k\ast}:\pi_{4k-1}(S^{2k})\to \pi_{4k-1}(P^{2k+1}(n))$ is too big.

By the homotopy classification of total spaces of  $S^{2k-1}$-fibrations over $S^{2k}$ obtained in this paper, we are able to complete the homotopy classification of total spaces of  $S^{2k-1}$-bundles $X$ over $S^{2k}$ using homotopy method. 
This is one of the motivations for our study on homotopy theory of $S^{2k-1}$-fibrations over $S^{2k}$. Before stating two other motivations, let's explain the history of this topic.

After James and Whitehead's work on homotopy of sphere-bundles over spheres, Sasao\cite{SASAO3} considered the homotopy classification of total spaces of  $S^{q}$-fibrations over $S^{r}$. When $r-q>1$ or for fibrations with sections, an identity like (\ref{relative}) was established by homotopy theory method, with the help of this identity, similar results to that of James and Whitehead were obtained. Sasao remarked that the case when $r-q=1$ was different and left open.

Our interest on homotopy of $S^{2k-1}$-fibrations over $S^{2k}$ comes from Theriault's work \cite{Theriault} on Homotopy exponent of  $ P^{2k}(2^r)$. In 2008, Theriault constructed  a special  $S^{2k-1}$-fibration over $S^{2k}$ in the category of 2-local spaces. 

Note that  for $R=\Z_2$ or $\Z_{2^r}$, $H_{\ast}(\Omega\Sigma P^{2k}(2^r);R)=T(u_{2k-1}, v_{2k})$ which is the freely generated $R$-tensor algebra with generators $u_{2k-1}\in H_{2k-1}(P^{2k}(2^r),R)$ and  $v_{2k}\in H_{2k}(P^{2k}(2^r),R)$. 
He considered the following homotopy fibration given by Cohen \cite[Lemma 21.1]{Cohen Course}
\begin{align}
	X_r\xrightarrow{\mathbf{i}_r} QP^{2k}(2^r):=\Omega^{\infty}\Sigma^{\infty} P^{2k}(2^r)\xrightarrow{f_r} K(\Z_2, 4k-2). \label{fib:X_r}
\end{align}
where $f_r$  is a representative of the cohomology class $u_{2k-1}^2\in H_{\ast}(Q P^{2k}(2^r),\Z_2)$. Then $\tau_r(S^{2k}):=(4k-1)$-skeleton of $X_r$ is a  total space of $S^{2k-1}$-fibration over $S^{2k}$ (under 2-localization), which has homotopy type $P^{2k}(2^r)\cup e^{4k-1}$. He called  $\tau_r(S^{2k})$ ``mod-$2^r$ tangent bundles"  since $\tau_1(S^{2k})$  is just the unit tangent bundle of $S^{2n}$. 

The last motivation is from Kitchloo and Shankar's work \cite{S3S4bundle} on $S^3$-bundles over $S^4$ which was motivated by the problem of classifying $S^3$-bundles over $S^4$  up to homotopy, homeomorphism, and diffeomorphism  raised by  Grove and Ziller  \cite{Grove}. In their paper,  Kitchloo and Shankar used a clever argument connecting the cup product on the total spaces and differentials in the Serre Spectral sequences for related fibrations related to the bundles.

We are able to generalize their spectral sequence argument to all $S^{2k-1}$-fibration over $S^{2k}$, connecting Hopf invariant of a homotopy version of $\mu$ and differentials in the Serre Spectral sequences for related fibrations related to the sphere fibrations. This is the first step to a complete homotopy classification of total spaces of  $S^{q}$-fibrations over $S^{r}$.

By the way, at about the same time,   D.Crowley and  C.Escher employed completely different methods to obtain the classification, up to homotopy, homeomorphism, and diffeomorphism, of the total spaces of 3-sphere bundles over the 4-sphere \cite{Crowley}.  

For  $S^{2k-1}$-fibration over $S^{2k}$, their total spaces are CW-complexes with three cells: 
\[
X = P^{2k}(n) \cup e^{4k-1}.
\] 
The integer $n$ in the aforementioned cellular CW-complex \( X \) depends only on the homotopy type of $X$. Therefore, determining the total spaces essentially reduces to the attaching map of the top-dimensional cell. Hence, determining the total space of such fibrations can be divided into the following two problems:

\begin{enumerate}
	\item [(i)] Determine the necessary and sufficient conditions for \( f : S^{4k-2} \rightarrow P^{2k}(n) \) to be the attaching map of the top-dimensional cell in an \( S^{2k-1} \)-fibration over \( S^{2k} \);
	\item [(ii)] Determine the homotopy types of the mapping cones $C_f$ of  \( f \) such that $C_f$ is a homotopy equivalent to the total space of  \( S^{2k-1} \)-fibration over \( S^{2k} \). 
\end{enumerate}

Kitchloo and Shankar solved the problem (i) for $2k=4$ and $8$ \cite{S3S4bundle}.

In this papar, we completely solve the Serre fibration version of the  problem (i)  for  all $k\geq 2$.  In a subsequent paper \cite{ZhuPanclassfyBundle}, we also completely resolve problem (ii) for small $k$.

To make sense of the introduction and state our main results, we need the following notations and conventions.  Unless explicitly stated otherwise, we will not distinguish between the notation for the homotopy or the equality  of two maps. 

For mod $n$ Moore space $P^{k+1}(n), n\geq 2$ of dimension $k+1$,  
there is a canonical homotopy cofibration 
\[S^{k} \xrightarrow{[n]} S^{k} \xrightarrow{i_k} P^{k+1}(n)\xrightarrow{p_{k+1}} S^{k+1},\] 
where  $\iota_{k}$ denotes the homotopy class of the identity map on sphere $S^k$ and  $[n]$ is the simplification of $n\iota_k$ if it will not cause confusion on the dimension of the sphere.

Suppose there is a cofibration sequence:
\begin{align}
	S^{4k-2} \overset{f}{\to} P^{2k}(n)\overset{i_X}{\to}  X, n\geq 2~\text{and}~ k\geq 2. \label{cofiber X}
\end{align}

Note that $\pi_{2k}(\Pn, S^{2k-1})=\Z\{X_{2k}\}$, where $X_{2k}$ is the characteristic map of $2k$-cell in $\Pn$.
Let $[X_{2k},\iota_{2k-1}]_r$ be the relative Whitehead product (defined in \cite{Blakers prodcuts}) of the generator $X_{2k}$ of $\pi_{2k}(\Pn,S^{2k-1}) $ and the generator $\iota_{2k-1}$ of $\pi_{2k-1}( S^{2k-1})$.

There is the following exact sequence of the homotopy groups for the pair $(\Pn, S^{2k-1})$
\begin{align}
	\xymatrix{
		\pi_{4k-2}(\Pn)	 \ar[r]^-{j_\ast}  & \pi_{4k-2}(\Pn,S^{2k-1})\ar[r]^-{\partial} & \pi_{4k-3}(S^{2k-1}).  } \label{exact1 pi(P S)}
\end{align}
Let $(a,b)$ be the greatest common divisor of integers $a$ and $b$ and $n_2=\left\{
\begin{array}{ll}
	n, &\! \hbox{$k=2,4$;} \\
	2n, &\! \hbox{$k\neq 2,4$ and $2|n$.}
\end{array}
\right.$
Based on the conditions satisfied by the attaching map $f$, we can provide a criterion for the complex 
$X$ given in (\ref{cofiber X}) to be homotopy equivalent to the total space of a $S^{2k-1}$-fibration over $S^{2k}$.

The key ingredient in establishing the criterion is a homotopy construction of the map $\mu$ introduced at the beginning of Section \ref{sec:Props and Lems} . The existence of the construction itself implies the identity (\ref{mu}).

\begin{theorem}\label{thm: X fibration}
	Let $X$ be a CW complex given by (\ref{cofiber X}). Then $X$ is homotopy equivalent to the total space of an $S^{2k-1}$-fibration over $S^{2k}$   if and only if  
	$\exists m,\tau\in \mathbb{Z}$ such that the following conditions are true:
	\begin{itemize}
		\item $2\mid n$ if $k\ne 2,4$
		\item $(\tau,n)=1$ 
		\item $j_\ast(f)=m[X_{2k},\iota_{2k-1}]_r$
		\item  $m \equiv \pm \tau^2$ (\text{mod}~ $n_2$)
	\end{itemize}
\end{theorem}

As a corollary, we establish identity (\ref{relative}) for sphere fibrations.

\begin{corollary}\label{corollary: Gener Sasao}
	If $X=\Pn\cup_{f}e^{4k-1}$
	is homotopy equivalent to a total space of  $S^{2k-1}$-fibration over $S^{2k}$, then $X\simeq \Pn\cup_{\tilde{f}} e^{4k-1}$, such that $j_{\ast}(\tilde{f})=[X_{2k},\iota_{2k-1}]_r$.
\end{corollary}

It is known from the main theorem of  \cite{Eulerclass} that  odd integer $n$ can not be realized as the Euler number of a $2k$-dimensional real vector bundle over $S^{2k}$ for $k\neq 2,4$. That is for any  CW complex  $X$ given by (\ref{cofiber X}) for $k\neq 2,4$ and  $n$ is odd, then it is not homotopy equivalent to a $S^{2k-1}$-bundle over $S^{2k}$.
By Theorem \ref{thm: X fibration}, we can extend this conclusion from the case of sphere bundles to the case of sphere fibrations. 
\begin{corollary}
	Let $X$ be a  CW complex given by (\ref{cofiber X}) with  $k\neq 2,4$ and $n$ is odd, then  $X$ is not homotopy equivalent to an $S^{2k-1}$-fibration over $S^{2k}$.
\end{corollary}

Let $K^n_k=Ker(p_{2k\ast}:\pi_{4k-2}(\Pn)\to \pi_{4k-2}(S^{2k}))$.

By \cite[Theorem 4.2]{James On sph bundle}, $\partial \pi_{4k-1}(P^{2k}(n),S^{2k-1})=n \pi_{4k-2}( S^{2k-1})$, it follows from  (\ref{exact1 pi(P S)}) that there is a short exact sequence
\begin{align}
	0 \to  \pi_{4k-2}( S^{2k-1})/n \pi_{4k-2}( S^{2k-1}) \overset{i_{2k-1\ast}}{\to}  K^n_k \overset{j_\ast}{\to} j_\ast(K^n_k) \to 0 \label{exact: K}
\end{align}
By Lemma \ref{lem: j(K)},  $ j_{\ast}(K^n_k)$ is a cyclic group generated by  $[X_{2k},\iota_{2k-1}]_r$  when $k=2,4$ or $k\neq 2,4$ and $2|n$.  Let  $\theta_{k}^n\in	\pi_{4k-2}(\Pn)$ be any fixed lift of $[X_{2k},\iota_{2k-1}]_r$ by the map  $j_{\ast}$. 
Thus 
\begin{align}
	K^n_k=\lr{\theta_{k}^n}+i_{2k-1\ast}\pi_{4k-2}( S^{2k-1})/n i_{2k-1\ast}\pi_{4k-2}( S^{2k-1}). \label{equ: K for general n}
\end{align}
where $\lr{\theta_{k}^n}$ denotes the cyclic subgroup of $K^n_{k}$ generated  by $\theta_{k}^n$. 

Now combine (\ref{equ: K for general n}) and Theorem \ref{thm: X fibration}, we get the following theorem
\begin{theorem}\label{example}
	Let $X$ be a CW complex given by (\ref{cofiber X}). Then $X$ is homotopy equivalent to the total space of an $S^{2k-1}$-fibration over $S^{2k}$   if and only if  
	$\exists a,\tau\in\Z,\gamma\in \pi_{4k-2}(S^{2k-1})$ such that the following conditions are true:
	\begin{itemize}
		\item $2\mid n$ if $k\ne 2,4$
		\item $(\tau,n)=1$ 
		\item $f=a\theta_{k}^n + i_{2k-1}\fhe \gamma$
		\item  $a \equiv \pm \tau^2$ (\text{mod}~ $n_2$)
	\end{itemize}
\end{theorem}

\begin{remark}
	Theorem \ref{example} enables us to count the number $G_k^n$ of  homotopy types of the total spaces of $S^{2k-1}$-fibrations over $S^{2k}$ for specific $k$.
	As an application of Theorem \ref{example}, we get the  $G_k^n$ for $2\leq k\leq 6$ in a  subsequent paper of this topic \cite{ZhuPanclassfyBundle}. 
\end{remark}

It is well known that  the isomorphism classes of  $S^{2k-1}$-bundles over $S^{2k}$ are determined by the characteristic maps in  $\pi_{2k-1}(SO(2k))$.  For $k=2$, based on the results of  homeomorphism  and diffeomorphism classification of the total space of theses sphere bundles by finding the complete invariants,  Crowley provided a homotopy classification by the characteristic maps  \cite[Theorem 1.1]{Crowley}. For $k=4$, by the  same methods  as that of  Crowley,  complete diffeomorphism classification and  partial homeomorphism classification were achieved by Grey \cite{S8bundles}, who also gave a conjecture on the homotopy classification at the end of his Master’s thesis.  In our subsequent paper \cite{ZhuPanclassfyBundle}, by revealing the  relationship between the characteristic map  of $S^{2k-1}$-bundle over $S^{2k}$ and the attaching maps of its total space for $k=2,4$,   we  classify   homotopy types of total spaces of such bundles  via homotopy-theoretic methods by using Theorem  \ref{example} and give an  answer to  the Conjecture 5.5.3 in \cite{S8bundles}.

The previous results also allow us to give a homotopy classification of total spaces $X$ by the attaching maps of the top dimensional cell of the total spaces when $k\ne 2,4$ and thus solve the corresponding Problem (ii).

\begin{theorem}\label{thm: x1=X2}
	 Let $k\neq 2,4$ and   $X_i= \Pn\cup_{f_i}e^{4k-1}$ with $j_{\ast}(f_i)=[X_{2k},\iota_{2k-1}]_r$ ($i=1,2$) be homotopy equivalent to the total space of $S^{2k-1}$-fibration over $S^{2k}$. Then
	$X_1 \simeq X_2$ if and only if there is an integer $t$ such that  $t\Sigma^{\infty}f_1=\Sigma^{\infty}f_2$  with $t^2\equiv 1 $ (mod $2n$). 
\end{theorem}

Cohen \cite[Section 21]{Cohen Course} constructed  elements $\alpha\in \pi_{4k-2}(P^{2k}(2^r))$   which satisfies $\bar h_{2^r}(\hat{\alpha})=[u_{2k-1}, v_{2k}]$, where  $\bar h_{2^r}$ is the mod $2^r$ reduction of Hurewicz homomorphism and $\hat{\alpha}\in \pi_{4k-3}(\Omega P^{2k}(2^r))$ denotes  the adjoint map of $\alpha$. 
Here we call  any such $\alpha$ a ``Cohen element".  Cohen calculated the orders of $\alpha$ when $k\neq 2^i(i\geq 1)$ \cite{Cohen Course} or case  $r=1$ \cite{CohenWu95}.Mukai independently computed the orders of $\alpha$ for the Stiefel manifold $ V_{2k+1,2}$. The following theorem  fills a gap in the existing results concerning the orders of $\alpha$ and  reveals a relation between $\alpha$ and the attaching map of $X=P^{2k}(2^r)\cup_f e^{4k-1}$ which is homotopy equivalent to a total space of $S^{2k-1}$-fibration over $S^{2k}$. 

\begin{theorem}\label{thm:Cohen element} For an abelian group $A$, denote $o(A)=min\{ \text{positive integer}~a|~aA=0\}$ as the order of $A$ and $o_{k}^n=o(K_{k}^n)$.
	
	\begin{enumerate}[(i)]
		\item\label{thm:Cohen element:i} 
		\begin{align*}
			\mathbf{order} (\theta_{k}^n)=o_{k}^n=\left\{
			\begin{array}{ll}
				n, & \hbox{$2\nmid n$ or $k=2, 8|n$ or $k=4, 16|n$;} \\
				4n, & \hbox{$2\nmid k$, and $n\equiv 2$ (mod $4$);}	
				\\
				2n, & \hbox{otherwise.}
			\end{array}
			\right.
		\end{align*}
		Moreover if $2|n$, then  $K_{k}^{n}\cong \Z_{o_{k}^n}\oplus A$ with $2 o(A)|o_{k}^n$;
		\item\label{thm:Cohen element:ii} 
		Let $\alpha\in \pi_{4k-2}(P^{2k}(2^r))$ be any Cohen element,  and $f$ is the attaching map of  $X=P^{2k}(2^r)\cup_f e^{4k-1}$ which is homotopy equivalent to a total space of $S^{2k-1}$-fibration over $S^{2k}$. We have 
		\begin{align*}
			\mathbf{order} (f)=\mathbf{order}(\alpha)=o_{k}^n.
		\end{align*}
		\item\label{thm:Cohen element:iii} If  $f$ is given by (\ref{thm:Cohen element:ii}) , then   
		\begin{align*}
			\bar h_{2}(\hat f)=[u_{2k-1},v_{2k}],
		\end{align*}
		where  $\bar h_{2}$ is the mod $2$ reduction of Hurewicz homomorphism.
	\end{enumerate}
\end{theorem}

\begin{corollary}\label{coro:stable trival of theta}
	There is a  lift $\theta_{k}^n\in K_{k}^n$ of $[X_{2k},\iota_{2k-1}]_r$ in (\ref{equ: K for general n} ) such that $\Sigma^{\infty} \theta_k^n=0$. 
\end{corollary}

Theriault observed that stable triviality of the attaching map is almost equivalent to the existence of the 2-local " homotopy tangent bundle". The above two result generalize this observation. This reflects a deep relation between the existence of $S^{2k-1}$-fibration over $S^{2k}$ and elements of highest order in $\pi_{4k-2}P^{2k}(n)$.

This relation will be discussed in more details in the future including the existence of ``homotopy tangent bundle" which is functorial with respect to certain natural maps between Moore spaces.

This paper is organized as follows: In Section \ref{sec:Htpy Moore}, 
 some homotopy properties of Moore spaces are given, along with the proofs of  Theorem \ref{thm:Cohen element} and Corollary \ref{coro:stable trival of theta}.
 Section \ref{sec:Props and Lems} gives some useful propositions and lemmas for some extensions of maps.   In Section \ref{sec: Homomorphism Lambda }, we define a map from $K_{k}^n$ to $\Z_{n_2}$, which is an epimorphism,  to detect the attaching map $f\in K_{k}^n$ of the top-dimensional cell in an \( S^{2k-1} \)-fibration over \( S^{2k} \) and give the proofs of  Theorem \ref{thm: X fibration} and Corollary \ref{corollary: Gener Sasao}. The last section gives the proof of Theorem \ref{thm: x1=X2}.

\section{Homotopy of Moore spaces}
\label{sec:Htpy Moore}
In this section, we give some properties of Moore spaces and give the proof of Theorem \ref{thm:Cohen element} and Corollary \ref{coro:stable trival of theta}.

Let $Aut(X)$ be the group of self-homotopy equivalences of $X$.  

By Corollary 1.4.10 of \cite{Baues} and Lemma (5) of \cite{SASAO2}, 
$$[P^{k+1}(n), P^{k+1}(n)]=\left\{
\begin{array}{ll}
	\Z_n\{\iota_P\}, & \hbox{$n$  odd;} \\
	\Z_{2n}\{\iota_P\},~\text{with}~i_k\fhe\eta_k\fhe p_{k+1}=n\iota_P,& \hbox{$ 2|n, 4\nmid n$;}
	\\
	\Z_n\{\iota_P\}\oplus \Z_2\{i_k\fhe\eta_k \fhe p_{k+1} \}, & \hbox{$4|n$.}	
\end{array}
\right.$$
where $\iota^{k+1}_{P}$ is the identity map of $P^{k+1}(n)$ and it is simplified by $\iota_{P}$  when no confusion arises.
From Lemma (7) of \cite{SASAO2}, 
$$Aut(P^{k+1}(n))=\left\{
\begin{array}{ll}
	\{t\iota_P | t\in \Z^{\ast}_n\}, & \hbox{$n$  odd;} \\
	\{t\iota_P | t\in \Z^{\ast}_{2n}\},~\text{with}~i_k\fhe\eta_k\fhe p_{k+1}=n\iota_P, & \hbox{$ 2|n, 4\nmid n$;}
	\\
	\{ t\iota_P+\epsilon i_k\fhe\eta_k\fhe p_{k+1}~|~t\in \Z_{n}^{\ast},\epsilon\in \{0,1\}\}, & \hbox{$4|n$.}	
\end{array}
\right.$$

The following proposition comes from \cite[Lemma 4.10]{Yamaguchi}
\begin{proposition}\label{Prop: hty equi}
	$	\Pn\cup_{f_1}e^{4k-1}\simeq 	\Pn\cup_{f_2}e^{4k-1}$ if and only if there is a homotopy equivalence $g\in  Aut(P^{2k}(n))$ such that $g\fhe f_1=\pm f_2$.
\end{proposition} 

Note that $K^n_k=Ker(p_{2k\ast}:\pi_{4k-2}(\Pn)\to \pi_{4k-2}(S^{2k}))$, then we have 
\begin{lemma}\label{lem: j(K)}
	\begin{align}
		j_{\ast}(K^n_k)=\left\{
		\begin{array}{ll}
			\Z_n\{[X_{2k}, \iota_{2k-1}]_r\}, & \hbox{$k=2,4$;} \\
			\Z_{2n}\{[X_{2k}, \iota_{2k-1}]_r\}, & \hbox{$ k\neq 2,4$ and $ 2|n$;}\\
			\Z_{n}\{2[X_{2k}, \iota_{2k-1}]_r\}, & \hbox{ $k\neq 2,4$ and $2\nmid n$.}
		\end{array}
		\right.
	\end{align}
\end{lemma}
\begin{proof}
	By the following commutative diagram of exact sequences:
	
	\begin{align}
		\xymatrix{
			K^n_k\ar[r]\ar[d]_{j_\ast }&	\pi_{4k-2}(\Pn) \ar[d]_{j_\ast } \ar[r]  & \pi_{4k-2}(S^{2k})\ar[d]^{\cong} \\
			Ker(p_{2k\ast}) \ar[r]&	\pi_{4k-2}(\Pn,S^{2k-1}) \ar[r]^-{p_{2k\ast}}  & \pi_{4k-2}(S^{2k},\ast)  } \label{diam: K}
	\end{align}
	where  $p_{2k}:(\Pn,S^{2k-1})  \to (S^{2k},\ast) $ is the map induced by the pinch map $p_{2k}$.
	
	From the above commutative diagram and (\ref{exact1 pi(P S)}), it is easy to get 
	\begin{align}
		j_\ast(K^n_k)= Ker(p_{2k\ast})\cap Ker(\partial). \label{equ: i(K)}
	\end{align}
	By the result of Sasao \cite{SASAO1}, $Ker(p_{2k\ast})=\Z_{n}\{[X_{2k},\iota_{2k-1}]_r\}$ or $\Z_{2n}\{[X_{2k},\iota_{2k-1}]_r\}$ according as $k=2,4$ or not. 
	Now $\partial ([X_{2k},\iota_{2k-1}]_r)=-[\partial X_{2k},\iota_{2k-1}]=-n [\iota_{2k-1},\iota_{2k-1}]$ by \cite[(3.5)]{Blakers prodcuts} or \cite[(2.1)]{James}. 
	Therefore, Lemma \ref{lem: j(K)} is obtained from (\ref{equ: i(K)}) since $[\iota_{2k-1}, \iota_{2k-1}]=0$ for $k=2,4$ and the order of $[\iota_{2k-1}, \iota_{2k-1}]$ is 2 for $1 <k\neq 2,4$ by \cite[Theorem 8.8]{GTM61}.
\end{proof}

 We give the proof of Theorem \ref{thm:Cohen element} and Corollary \ref{coro:stable trival of theta}. 

\begin{proof}[Proof of Theorem \ref{thm:Cohen element} (\ref{thm:Cohen element:i})]
	$\mathbf{order}(\theta_{k}^n)=o_{k}^n$ is from (\ref{equ: K for general n}) and Lemma \ref{lem: j(K)}. 
	
	The results  for $k=2,4$ are obtained by Theorem (iii) of \cite{SASAO2}. 
	
	Next we  prove it for $k\neq 2,4$.	By \cite[Proposition]{Barratt}, 
	\begin{align*}
		&	p^r\pi_{4k-2}(P^{2k}(p^r))=0, p ~\text{is odd prime, ~~and~~~ } 2^{r+1}\pi_{4k-2}(P^{2k}(2^r))=0, r\geq 2.
	\end{align*}
	This implies $o_{k}^n=n$ for odd $n$ and   $o_{k}^n=2n$ for $k\neq 2,4$ with $4|n$ by  exact sequence (\ref{exact: K}) and Lemma \ref{lem: j(K)}.   Since  $4\pi_{4k-2}(P^{2k}(2))=0$ for $2|k$ by \cite[Corollary 1.2]{Mikhailov JieWu},  $o_{k}^n=2n$ for  $2|k$ and  $n\equiv 2$ (mod $4$). 
	From  \cite[Theorem 2.2]{CohenWu95}, there is an order $8$ element $\lambda_{2k}\in K_{k}^2$ for $2\nmid k$. Hence  $o_{k}^n=4n$ for $2\nmid k$ and  $n\equiv 2$ (mod $4$).

	Now, use the  exact sequence (\ref{exact: K}) and Lemma \ref{lem: j(K)} again, we get the ``Moreover" part of Theorem \ref{thm:Cohen element} (\ref{thm:Cohen element:i}) for $k\neq 2,4$.
\end{proof}

\begin{proof}[Proof of Theorem \ref{thm:Cohen element} (\ref{thm:Cohen element:ii}) and (\ref{thm:Cohen element:iii})]
	We first prove that any Cohen element $\alpha\in K_{k}^{2^r}$. 
	
	Since  $E^{\infty}\fhe f_{r}= 0$ where $f_r$ comes from fibration sequence (\ref{fib:X_r}) and $E^{\infty}: P^{2k}(2^r)\rightarrow QP^{2k}(2^r)$ is the stabilization map, there is a lift  $g:P^{2k}(2^r)\rightarrow X_r$ such that $\mathbf{i}_r\fhe g= E^{\infty}$ and it induces isomorphism on $2n-1$-dimensional homology groups. Let $F\xrightarrow{j'} P^{2k}(2^r)\xrightarrow{g} X_r$ be the fibrations sequence where  $F$ is the homotopy fibre of $g$.  As mentioned in the proof of \cite[Lemma 21.1]{Cohen Course},  $[u_{2k-1},v_{2k}]$ is in the image of $(\Omega j')_{\ast}: H_{4k-3}(\Omega F;\Z_{2^r})\rightarrow  H_{4k-3}( \Omega P^{2k}(2^r);\Z_{2^r})$ and $\Omega F$ is $(4k-4)$-connected. Thus there is a map $\rho: S^{4k-2}\rightarrow  F$ such that $\bar h_{2^r}((\Omega j')\fhe \hat\rho)=[u_{2k-1},v_{2k}]$. 
	Then $\alpha$ is defined by $j'\fhe \rho$.  Consider the following homotopy commutative diagram 
	\begin{align*}
		\xymatrix{
			S^{4k-2}\ar@/_0.5pc/[rr]_{\alpha}\ar[r]^{\rho}&	F\ar[r]^-{j'} & P^{2k}(2^r)\ar[d]^{p_{2k}}\ar[rr]^-{E^{\infty}=\mathbf{i}_r\fhe g} && Q P^{2k}(2^r) \ar[d]^{^{Qp_{2k}}}\ar[r]^{f_{r}} &K(\Z_2, 4k-2) \\
			&&S^{2k}\ar[rr]^-{E^{\infty}}& &QS^{2k}&
		} 
	\end{align*}
	we get $E^{\infty}\fhe p_{2k}\fhe \alpha= (Qp_{2k})\fhe \mathbf{i}_r\fhe g\fhe j'\fhe \rho=0$. Since $E^{\infty}:S^{2k}\rightarrow QS^{2k}$ is $(4k-1)$-connected, $p_{2k}\fhe \alpha\simeq \ast$, i.e., $\alpha\in K_{k}^{2^r}$.

	Secondly, the order of $\alpha$ must be $o_{k}^{2^r}$. Otherwise,  $\alpha=2t\theta_{k}^{2^r}+i_{2k-1}\fhe \gamma$ for some integer $t$ and $\gamma\in\pi_{4k-2}(S^{2k-1})$ by (\ref{equ: K for general n}) . Then 
	\begin{align*}
		\bar h_{2^r}(\hat \alpha)= 2t\bar h_{2^r}(\hat \theta_{k}^{2^r})+\bar h_{2^r}((\Omega i_{2k-1})\fhe \hat \gamma)= 2t\bar h_{2^r}(\hat \alpha)
	\end{align*}
	which  contradicts to $\bar h_{2^r}(\hat \alpha)=[u_{2k-1},v_{2k}]$ since the order of  $[u_{2k-1},v_{2k}]$ is $2^r$.

	If $f$ is the attaching map of $X$ given in Theorem \ref{thm:Cohen element}, then $\Lambda(f)=\pm \tau^2$ with $\tau\in \Z^{\ast}_{n_2}$ by  Theorem  \ref{thm: condition on Hf}. From  Lemma \ref{lem:homo bar H}, the order of $f$ is also $o_{k}^{2^r}$. The  proof of  Theorem \ref{thm:Cohen element} (\ref{thm:Cohen element:ii})  is finished. 
	
	At last, from equation (\ref{equ: K for general n})  and Theorem \ref{thm:Cohen element} (\ref{thm:Cohen element:i}) (\ref{thm:Cohen element:ii}), we get
	\begin{align*}
		f=t\alpha+i_{2k-1}\xi, ~~\text{for some odd integer}~t~\text{and}~\xi\in \pi_{4k-2}(S^{2k-1}).
	\end{align*}
	Then Theorem \ref{thm:Cohen element} (\ref{thm:Cohen element:iii})  is easily obtained.
\end{proof}

\begin{proof}[Proof of Corollary \ref{coro:stable trival of theta}]
	Since $K_k^n$ is a finite group and its $p$-primary component   is  $K_k^{p^r}$ for any prime $p$,	we only need to show that there is a lift $\theta_{k}^{p^r}\in  K_k^{p^r}$  of $[X_{2k},\iota_{2k-1}]_r$  for any prime $p$ such that  $\Sigma^{\infty} \theta_k^{p^r}=0$. 
	
	Firstly,  for any prime $p$, fix a lift $\theta_{k}^{p^r}\in  K_k^{p^r}$  of $[X_{2k},\iota_{2k-1}]_r$.

	We have the following commutative diagram of exact sequences
	\begin{align}
		\xymatrix{
			\pi_{4k-2}^s(S^{2k-1};p)\ar[r]^-{i_{2k-1\ast}^s}& Ker(p^s_{2k\ast})\ar@{>->}[r]&	\pi^s_{4k-2}(P^{2k}(p^r))  \ar[r]^{p_{2k\ast}^s}  & \pi_{4k-2}^s(S^{2k};p) \\
			\pi_{4k-2}(S^{2k-1};p)\ar[u]^{\Sigma^{\infty}}\ar[r]^-{i_{2k-1\ast}}&K_{k}^{p^r} \ar[u]^{\Sigma^{\infty}} \ar@{>->}[r]&	\pi_{4k-2}(P^{2k}(p^r)) \ar[r]^-{p_{2k\ast}} \ar[u]^{\Sigma^{\infty}} & \pi_{4k-2}(S^{2k};p)\ar[u]^{\Sigma^{\infty}\cong}  } 
	\end{align}
	By $p_{2k}\fhe \theta_{k}^{p^r}=0$, we get $\Sigma^{\infty}\theta_{k}^{p^r}\in Ker(p_{2k\ast}^s)=Im(i_{2k-1\ast}^s)$

	If $p$ is odd, then $	\pi_{4k-2}(S^{2k-1};p)\xrightarrow{\Sigma^{\infty}}	\pi_{4k-2}^s(S^{2k-1};p)$ is surjective by \cite[(13.5)]{Toda}. So is $K_{k}^{p^r}\xrightarrow{\Sigma^{\infty}} Ker(p_{2k\ast}^s)$.  Hence 
	\begin{align*}
		\Sigma^{\infty}\theta_{k}^{p^r}=i_{2k-1\ast}^s\Sigma^{\infty}\gamma_{p},~~\text{for some }~\gamma_{p}\in \pi_{4k-2}(S^{2k-1};p). 
	\end{align*}
	Thus $\Sigma^{\infty}(\theta_{k}^{p^r}-i_{2k-1}\fhe\gamma_{p})=0$. Replacing $\theta_{k}^{p^r}$ by $\theta_{k}^{p^r}-i_{2k-1}\fhe \gamma_{p}$, we get the $\theta_{k}^{p^r}$ satisfied the condition.
	
	If $p=2$,  consider the $(4k-1)$-skeleton of $X_r$ given in the homotopy fibration (\ref{fib:X_r}). It is a total space of  $S^{2k-1}$-fibration over $S^{2k}$ under $2$-localization, which has homotopy type $P^{2k}(2^r)\cup_{f_{\tau}} e^{4k-1}$.
	From \cite[page 375]{Theriault}, we have  the  following cofibration sequence
	\begin{align*}
		S^{4k-2}\xrightarrow{f_{\tau}} P^{2k}(2^r) \xrightarrow{i_\tau} 	\tau_r(S^{2k})
	\end{align*}
	and  the  homotopy commutative diagram  with $f_r\fhe E^{\infty}=0$.
	\begin{align}
		\xymatrix{	\tau_r(S^{2k})\ar@{^{(}->}[r]^-{j_{\tau}} &  X_r \ar[r]^-{\mathbf{i}_r} &Q(P^{2k}(2^r))\ar[r]^{f_r}&K(\Z_2, 4k-2) \\
			&&P^{2k}(2^r)\ar[u]_-{E^{\infty}}\ar[ur]_-{0} \ar[ull]^{i_{\tau}}&  } \label{diam:tau_r}
	\end{align}
	So $E^{\infty}\fhe f_{\tau}=\mathbf{i}_r\fhe j_{\tau}\fhe i_{\tau}\fhe f_{\tau}=0$. By Theorem \ref{thm:Cohen element}, $\theta_{k}^{2^r}=tf_{\tau}+i_{2k-1}\fhe \gamma_2$ for some  $\gamma_2\in \pi_{4k-2}(S^{2k-1};2)$. Replacing $\theta_{k}^{2^r}$ by $\theta_{k}^{2^r}-i_{2k-1}\fhe \gamma_{2}$, we get the $\theta_{k}^{2^r}$ satisfied the condition.
	
\end{proof}

By the application of Theorem \ref{thm:Cohen element}, we get the following lemmas about Moore spaces, which will be useful for proving Theorem \ref{thm: x1=X2}.

\begin{lemma}\label{Lem: pH2(theta), k=3,5}
	For $k\geq 2$ and $2|n$,  	$(p\wedge \iota^{2k-1}_P)\fhe H_2(\theta_k^{n})=li_{4k-2}$ for some odd integer $l$.
\end{lemma}
\begin{proof}
	We only prove this lemma for the case $k\neq 2,4$, since the cases 
	$k=2,4$ are similar and easier. To prove $l$ is odd, it suffices to consider the $2$-localization 
	$(P^{2k}(n))_{(2)}\simeq P^{2k}(2^r)$, i.e, we prove this lemma for the case $n=2^r$, $r\geq 1$.
	
	We generalize the map $\bar H_2: \Omega P^{n+1}(2)\rightarrow \Omega P^{2n+1}(2)$ in  \cite[Section 4.4]{WJProjplane} to the mod $2^r$ Moore space as follows
	
	$\bar H_2: \Omega P^{n+1}(2^r)\simeq J(P^{n}(2^r))\xrightarrow{H'_2}  J(P^{n}(2^r)\wedge P^{n}(2^r))\xrightarrow{J(q)}  J(P^{2n}(2^r)) \simeq \Omega  P^{2n+1}(2^r)$.
	\newline
	where $J(X)$ is the James construction for some based spase $X$, which is homotopy equivalent to the $\Omega\Sigma X$ when $X$ is $1$-connected; $q=p_{n}\wedge \iota_P$ is the pinch map $P^{n}(2^r)\wedge P^{n}(2^r)\rightarrow P^{n}(2^r)\wedge P^{n}(2^r)/(S^{n-1}\wedge P^{n}(2^r))=P^{2n}(2^r)$;  $ \Omega P^{n+1}(2^r)\simeq J(P^{n}(2^r))\xrightarrow{H'_2}  J(P^{n}(2^r)\wedge P^{n}(2^r))\simeq \Omega\Sigma(P^{n}(2^r)\wedge P^{n}(2^r))$ is  the second James-Hopf invariant. 
	
	By Lemma 3.12 of \cite{Steer1963}, the second James-Hopf invariant $H_2'$ given here is   homotopic to the Hopf-Hilton invariant $H_2$ defined in  \cite[(4.1)]{Hopfinvar}. Consider the composition
	\begin{align*}
		\bar{H}_{2\ast}: H_{\ast}(\Omega P^{2k}(2^r),\Z_2)\!\xrightarrow{H'_{2\ast}}\!H_{\ast}(\Omega \Sigma P^{2k-1}(2^r)\!\wedge\! P^{2k-1}(2^r),\Z_2)\!\xrightarrow{(\Omega\Sigma q)_{\ast}}\!H_{\ast}(\Omega P^{4k-1}(2^r),\Z_2)	
	\end{align*}
	where  $H_{\ast}(\Omega P^{4k-1}(2^r);\Z_2)=T(u',v')$ and the degrees of $u'$ and $v'$ are $4k-3$ and $4k-2$ respectively.
	By \cite[Proposition 5.3]{CohenTalor} we get $\bar{H}_{2\ast}([u,v])=u'$ and $\bar{H}_{2\ast}(v^2)=v'$.  
	Hence  $\bar H_{2\ast}\bar h_{2}(\hat\theta_k^{2^r})=u'$ by Theorem \ref{thm:Cohen element} (\ref{thm:Cohen element:iii}). This implies the following composition is homotopy equivalent to $l\hat i_{4k-2}:S^{4k-3}\rightarrow \Omega P^{4k-2}(2^r)$ with odd $l$
	\begin{align}
		&	\xymatrix{
			S^{4k-3}\ar@/_0.5pc/[rrr]_{\hat \theta_k^{2^r}}	\ar@{^{(}->}[rr]^-{\Omega\Sigma=E_{S^{4k-3}}}&& \Omega S^{4k-2}\ar[r]^-{\Omega\theta_k^{2^r}}
			&	\Omega P^{2k}(2^r)\ar[r]^-{\bar H_2}	&\Omega P^{4k-1}(2^r)}, \nonumber \\
		i.e., &~ ~~~~~~~~~~~~~\bar H_2\fhe \hat\theta_k^{2^r} =l \hat i_{4k-2}, ~l~\text{is odd},	\label{Equ: H2Omega(theta)}
	\end{align}
	where 	$\kappa: [\Sigma X,Y]\xrightarrow{\cong} [X,\Omega Y]$, $f\mapsto \hat f=(\Omega f)\fhe E_X$ and  $E_X$ denotes the canonical inclusion $X\rightarrow \Omega\Sigma X$.
	
	Recall the  Hopf invariant $H_2:\pi_{4k-2}(P^{2k}(2^r))\rightarrow \pi_{4k-2}(P^{2k}(2^r)\wedge P^{2k-1}(2^r))$ is defined by the following composition
	\newline
	$\pi_{4k-2}(P^{2k}(2^r))\xrightarrow{\kappa \cong }	\pi_{4k-3}(\Omega P^{2k}(2^r))\xrightarrow{H'_2}\pi_{4k-3}(\Omega P^{2k}(2^r)\wedge P^{2k-1}(2^r))\xrightarrow{\kappa^{-1}}\pi_{4k-2}( P^{2k}(2^r)\wedge P^{2k-1}(2^r))$. 
	\begin{align}
		\text{That is},~	&	H_2(\theta_k^{2^r})=\kappa^{-1}( H'_2\fhe\hat\theta_k^{2^r})=\kappa^{-1}( H'_2\fhe\Omega(\theta_k^{2^r})\fhe E_{S^{4k-3}}). \label{Equ 1:H2(theta)}\\
		li_{4k-2} &=\kappa^{-1}(l\hat i_{4k-2})=\kappa^{-1}(\bar  H_2\fhe\hat\theta_k^{2^r}) ), ~~\text{by   (\ref{Equ: H2Omega(theta)})}\nonumber  \\
		&=\kappa^{-1}((\Omega\Sigma q)\fhe   H'_2\fhe\Omega(\theta_k^{2^r})\fhe E_{S^{4k-3}} ),~~\text{by ~definition of $\bar H_2$}\nonumber\\
		&=\kappa^{-1}(\Omega  (p_{2k}\wedge \iota_P)\fhe  \kappa(H_2(\theta_k^{2^r})) ),~~\text{by definition of  $q$  and (\ref{Equ 1:H2(theta)})}\nonumber\\
		&=(p\wedge \iota^{2k-1}_P)\fhe H_2(\theta_k^{2^r}).\nonumber
	\end{align}
	We complete the proof of this lemma.
\end{proof}

\begin{corollary}\label{cor: pH2(theta), k=3,5}
	Let $k\geq 2$ and $2|n$. for any $f=a\theta_k^{n}+i_{2k-1}\fhe \gamma\in K_{k}^n$ for some integer $a$ and $\gamma\in \pi_{4k-2}(S^{2k-1})$, 	$( \iota^{2k}_P\wedge p_{2k-1})\fhe H_2(f)=ali_{4k-2}$ for some odd integer $l$,	where $H_2$ is the Hilton-Hopf invariant given by (4.1) of \cite{Hopfinvar}.
\end{corollary}
\begin{proof}
	If $k\geq 2$ and $2|n$, then  we get  $( \iota^{2k}_P\wedge p_{2k-1})\fhe H_2(\theta_k^{n})=li_{4k-2}$ for some odd integer $l$ by Lemma \ref{Lem: pH2(theta), k=3,5}.
	\begin{align*}
		( \iota^{2k}_P\wedge p_{2k-1})\fhe H_2(f)=& a(\iota^{2k}_P\wedge p_{2k-1})\fhe H_2(\theta_k^{n})+(\iota^{2k}_P\wedge p_{2k-1})\fhe H_2(i_{2k-1}\fhe \gamma)\\
		=&ali_{4k-2}+(\iota^{2k}_P\wedge p_{2k-1})\fhe(\Sigma i_{2k-2}\wedge i_{2k-1})\fhe H_2(\gamma)\\
		=&ali_{4k-2}.
	\end{align*}
\end{proof}

\begin{lemma}\label{lem:exist g'}
	Let $k\geq 2$ and $2|n$. 	If  $g\in Aut(\Pn)$ and $f=\theta_k^{n}+i_{2k-1}\fhe \gamma\in K_{k}^n$ for some $\gamma\in \pi_{4k-2}(S^{2k-1})$, then 
	there is a $g'\in Aut(\Pn)$, such that $g'\fhe f=g\fhe f+i_{2k-1}\fhe [\eta_{2k-1},\iota_{2k-1}]$.	
\end{lemma}
\begin{proof}
	Note that $g=t\iota_P+\epsilon_1 i_{2k-1}\fhe\eta_{2k-1}\fhe p_{2k}$ for some odd integer $t$ and $\epsilon_1\in \{0,1\}$. 
	Let $g'=g+i_{2k-1}\fhe\eta_{2k-1}\fhe p_{2k}$. We have 
	\begin{align*}
		g'\fhe f=&g\fhe f+[g,  i_{2k-1}\fhe\eta_{2k-1}\fhe p_{2k}]\fhe H_{2}(f)~(\text{by}~p_{2k}\fhe f=0)\\
		=& g\fhe f+[g, i_{2k-1} \fhe\eta_{2k-1}]\fhe(\iota^{2k}_P\wedge p_{2k-1})\fhe  H_{2}(f )\\
		=&g\fhe f+[g,  i_{2k-1}\fhe\eta_{2k-1}]\fhe i_{4k-2}~\text{(by Corollary \ref{cor: pH2(theta), k=3,5})}\\
		=&g\fhe f+ [t\iota_P+\epsilon_1 i_{2k-1}\fhe\eta_{2k-1} \fhe p_{2k},  i_{2k-1}\fhe\eta_{2k-1}]\fhe (\Sigma i_{2k-2}\wedge \iota_{2k-1}) \\
		=&g\fhe f+ [\iota_P,  i_{2k-1} \fhe\eta_{2k-1}]\fhe(\Sigma i_{2k-2}\wedge \iota_{2k-1}) +\epsilon_1 i_{2k-1}\eta_{2k-1}\fhe[ p_{2k},  \iota_{2k-1}]\fhe (\Sigma i_{2k-2}\wedge \iota_{2k-1}) \\
		=&g\fhe f+ [\iota_{P}\fhe i_{2k-1},  i_{2k-1}\fhe \eta_{2k-1}\fhe\iota_{2k}]+\epsilon_1 i_{2k-1}\fhe\eta_{2k-1}\fhe[ p_{2k}\fhe i_{2k-1},  \iota_{2k}]\\
		=&g\fhe f+[ i_{2k-1},  i_{2k-1} \fhe\eta_{2k-1}]\\
		=&g\fhe f+i_{2k-1}\fhe [\eta_{2k-1},\iota_{2k-1}].
	\end{align*}
\end{proof}

\section{Some propositions for  extensions}
\label{sec:Props and Lems}

 For $ X= P^{2k}(n) \cup_{f}e^{4k-1}$ with $p_{2k}\fhe f=0$, If $p : \Pn\rightarrow S^{2k}$ is any map inducing an epimorphism in cohomology, then $p=ap_{2k}$ with $(a,n)=1$, then $p\circ f=[a]\fhe p_{2k}\fhe  f= [a]\fhe( p_{2k}\fhe  f)=0$.  
So there is an extension $\pi:X\rightarrow S^{2k}$ of $p$ via $i_X$, i.e. $i_X\fhe\pi=p$. In other words,  there is the extension $f_a(\pi)$ satisfies  the following homotopy commutative diagram
\begin{align}
	\small{\xymatrix{
			P^{2k}(n)\ar[d]^-{a\iota_P} \ar[r]^-{i_X}&X\ar[r]^-{p_X}\ar[d]^{\pi}&S^{4k-1}\ar[d]^{f_a(\pi)}\\
			P^{2k}(n) \ar[r]^-{p_{2k}}& S^{2k}\ar[r]^{[n]} & S^{2k}
	} }. \label{digm:fa[pi]}
\end{align}
where $\iota_P$ denotes the homotopy class of the identity map of $\Pn$. 

If $a=1$, then we simplify the notation $f_{a}(\pi)$ by $f(\pi)$.

\begin{lemma} \label{Lem:Hopf inv relation}
For $ X= P^{2k}(n) \cup_{f}e^{4k-1}$ with $p_{2k}\fhe f=0$,	if  $\tilde{\pi}:X\rightarrow S^{2k}$ is an  extension of $ap_{2k}$ with $(a,n)=1$,  then any extension of $ap_{2k}$ is obtained by $\alpha\tilde{\pi}~(\alpha \in\pi_{4k-1}(S^{2k}))$ which is defined by
\begin{align}
	\small{\xymatrix{
			X  \ar[d]_{\alpha \tilde{\pi}} \ar[r]  & S^{4k-1}\vee X  \ar[dl]^{(\alpha ,\tilde{\pi})}  \\
			S^{2k} & 
	} }.  \nonumber
\end{align}
where the horizontal map is the coaction induced by defining cofibration of $X$.

Moreover,  for the  Hopf invariant $H:\pi_{4k-1}(S^{2k})\rightarrow \Z$,
\begin{align}
	H(f_a(\alpha \tilde{\pi}))=n^2H(\alpha)+ H(f_a(\tilde{\pi})). \label{Equ Hopf relation}
\end{align}
\end{lemma}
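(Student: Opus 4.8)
The plan is to treat the two assertions separately: the classification of extensions follows formally from the Puppe sequence, while the Hopf-invariant relation needs a functorial analysis of the induced maps on cofibres. For the first claim I would apply $[-,S^{2k}]$ to the homotopy cofibration $\Pn\xrightarrow{i_X}X\xrightarrow{p_X}S^{4k-1}$, obtaining an exact sequence of pointed sets $[S^{4k-1},S^{2k}]\xrightarrow{p_X^\ast}[X,S^{2k}]\xrightarrow{i_X^\ast}[\Pn,S^{2k}]$ on which $\pi_{4k-1}(S^{2k})$ acts through the coaction $\mu\colon X\to S^{4k-1}\vee X$ of the defining cofibration. The defining property of this action is that two maps $X\to S^{2k}$ restrict to the same map along $i_X$ exactly when they lie in a single orbit. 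Since the extensions of $p_{2k}$ form precisely the fibre $(i_X^\ast)^{-1}([p_{2k}])$, which is nonempty because it contains $\tilde\pi$, every extension is of the form $\alpha\tilde\pi=(\alpha,\tilde\pi)\fhe\mu$ for some (non-unique) $\alpha\in\pi_{4k-1}(S^{2k})$, as asserted.

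For (\ref{Equ Hopf relation}) I would reduce everything to the single identity $f(\alpha\tilde\pi)=f(\tilde\pi)+(n\iota_{2k})\fhe\alpha$ in $\pi_{4k-1}(S^{2k})$. Granting this, (\ref{Equ Hopf relation}) follows from two standard properties of the Hopf invariant on $\pi_{4k-1}(S^{2k})$ (note that $2k$ is even): it is additive, and it satisfies $H(d\iota_{2k}\fhe\xi)=d^{2}H(\xi)$ for a self-map of $S^{2k}$ of degree $d$, the square appearing because pulling the $2k$-dimensional generator of the relevant mapping cone back along a degree-$d$ map multiplies it by $d$ while the top generator is preserved. Taking $d=n$ and $\xi=\alpha$ then gives exactly $H(f(\alpha\tilde\pi))=H(f(\tilde\pi))+n^{2}H(\alpha)$.

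The core of the argument is the displayed identity, which I would establish by naturality of the mapping cone. Factoring $\alpha\tilde\pi=(\alpha,\tilde\pi)\fhe\mu$ over the identity of $\Pn$ (note that $\mu$ sends $\Pn$ into the $X$-summand and that $(\alpha,\tilde\pi)$ restricts there to $\tilde\pi\fhe i_X=p_{2k}$), the induced map on cofibres factors as $f(\alpha\tilde\pi)=\overline{(\alpha,\tilde\pi)}\fhe\overline{\mu}$, where $\overline{\mu}\colon S^{4k-1}\to S^{4k-1}\vee S^{4k-1}$ is induced by $\mu$ on the cofibres of $i_X$ and of $\Pn\hookrightarrow S^{4k-1}\vee X$. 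By the construction of the coaction, $\overline{\mu}$ is the pinch comultiplication, while $\overline{(\alpha,\tilde\pi)}\colon S^{4k-1}\vee S^{4k-1}\to S^{2k}$ restricts to $f(\tilde\pi)$ on the summand coming from $X/\Pn$ and to $\rho\fhe\alpha$ on the wedge summand, where $\rho\colon S^{2k}\to C_{p_{2k}}$ is the inclusion of the bottom sphere into the cofibre of $p_{2k}$. Composing with the comultiplication turns this into the honest sum $f(\tilde\pi)+\rho\fhe\alpha$ in $\pi_{4k-1}(S^{2k})$.

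The main obstacle is to identify $\rho$, and this is exactly what produces the factor $n^{2}$. I would read it off the cofibre exact sequence of $p_{2k}$: since $\widetilde H_\ast(\Pn)$ is $\Z_n$ concentrated in degree $2k-1$, exactness gives $0\to\Z\xrightarrow{\rho_\ast}H_{2k}(C_{p_{2k}})\to\Z_n\to 0$, so that $C_{p_{2k}}\simeq S^{2k}$ and, under this identification, $\rho$ has degree $n$, i.e. $\rho=n\iota_{2k}$; this is precisely the map appearing in the Puppe continuation $\Pn\xrightarrow{p_{2k}}S^{2k}\xrightarrow{n\iota_{2k}}S^{2k}$ that forms the bottom row of (\ref{digm:tilde pi}). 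Substituting $\rho=n\iota_{2k}$ yields $f(\alpha\tilde\pi)=f(\tilde\pi)+(n\iota_{2k})\fhe\alpha$, and combined with the two properties of $H$ above this gives (\ref{Equ Hopf relation}). The remaining points, namely that the comultiplication produces the genuine group addition and that all cofibre identifications are compatible over $\mathrm{id}_{\Pn}$, are routine once this degree count is in hand.
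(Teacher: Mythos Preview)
Your proof is correct and follows essentially the same route as the paper: the classification of extensions via the coaction exact sequence, and then the identification $f(\alpha\tilde\pi)=[n]\fhe\alpha+f(\tilde\pi)$ in $\pi_{4k-1}(S^{2k})$ obtained by passing to cofibres, after which additivity of $H$ and the degree-square formula $H([n]\fhe\alpha)=n^{2}H(\alpha)$ finish the job. The paper records the cofibre step as the explicit diagram (\ref{diam: alpha tidlepi}) rather than in your ``naturality of the mapping cone'' language, but the content is identical.
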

\begin{proof}
The first conclusion is easily obtained by the following exact sequence
\begin{align}
	[P^{2k}(n), S^{2k}]\xleftarrow{i_X^{\ast}} [X,S^{2k}]\xleftarrow{p_X^{\ast}} \pi_{4k-1}(S^{2k})\xleftarrow{(\Sigma f)^{\ast}} [P^{2k+1}(n), S^{2k}]. \label{exact: i_X}
\end{align}
For the ``Moreover" part,  note that  $\alpha \tilde{\pi}$  fits into the following commutative diagrams:
\begin{align}
	\small{\xymatrix{
			X  \ar[d]_{\alpha \tilde{\pi}} \ar[r]  & S^{4k-1}\vee X \ar[r]^-{id \vee p_X} \ar[d]^-{(\alpha , \tilde{\pi})} \ar[r] & S^{4k-1}\vee S^{4k-1} \ar[d]^{([n]\fhe \alpha , f_a(\tilde{\pi}))}  \\
			S^{2k} \ar[r]_{=}& S^{2k} \ar[r]^{[n]}&  S^{2k} 
	} }.  \label{diam: alpha tidlepi}
\end{align}
Since the composition of the horizontal maps in the top of the  diagram (\ref{diam: alpha tidlepi}) is $\nu\fhe p_X$ \cite[Theorem 4.3.7]{Arkowitz},  where $\nu$ is the co-H-space structure map on $S^{4k-1}$.  Thus we have the following homotopy commutative diagram 
\begin{align*}
	\small{\xymatrix{
			X   \ar[r]^{p_X}\ar[d]_{\alpha \tilde{\pi}} \ar[r]  & S^{4k-1}\ar[d]^{[n]\fhe \alpha + f_a(\tilde{\pi})}  \\
			S^{2k} \ar[r]^{[n]}&  S^{2k} 
	} }. 
\end{align*}
This  implies that $f_a(\alpha \tilde{\pi})$ can be choosen to be $[n]\fhe \alpha + f_a(\tilde{\pi})$.

It follows that  $H(f_a(\alpha \tilde{\pi}))=H([n]\fhe \alpha) + H(f_a(\tilde{\pi}))=n^2H(\alpha)+ H(f_a(\tilde{\pi}))$.

\end{proof}
\begin{remark}\label{rem: diff f(pi)}
The different choices of the extensions $f_a(\tilde{\pi})$ of the map $\tilde{\pi}$ have the same Hopf invariant since they are related by an addition of an element $S^{4k-1} \overset{\Sigma f}{\to} P^{2k+1}(n)\to S^{2k}$ which is a suspension.
\end{remark}

Recall that $H^\ast (\Omega S^{2k}) $ is the free abelian group generated by $\{z_{(2k-1)j} ~|~ j = 0,1,2,\cdots \}$.

\begin{proposition}\label{differential}
	Let $f : S^{4k-1}\to S^{2k} , k > 1$ be a map and G be the homotopy fiber of $f$. Then in the induced fibration sequence $\Omega S^{2k}\to G \to S^{4k-1}$,  
	we have the
	identity $d_{4k-1}(z_{4k-2}) = H(f) ·\bar y_{4k-1}$,	where $\bar y_{4k-1} \in  H^{4k-1}(S^{4k-1})$ is a generator.  
\end{proposition}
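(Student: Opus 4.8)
The plan is to exploit that the base $S^{4k-1}$ has cohomology only in degrees $0$ and $4k-1$, so the cohomology Serre spectral sequence of the fibration $\Omega S^{2k}\to G\to S^{4k-1}$ has just two nonzero columns, $E_2^{0,\ast}=H^\ast(\Omega S^{2k})$ and $E_2^{4k-1,\ast}=H^{4k-1}(S^{4k-1})\otimes H^\ast(\Omega S^{2k})$. Consequently the only differential that can act nontrivially on $z_{4k-2}\in E_2^{0,4k-2}$ is the transgression $d_{4k-1}\colon E_{4k-1}^{0,4k-2}\to E_{4k-1}^{4k-1,0}\cong H^{4k-1}(S^{4k-1})$, and I would record $d_{4k-1}(z_{4k-2})=c(f)\,\bar y_{4k-1}$ for an integer $c(f)$. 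The whole proposition then amounts to identifying $c(f)$ with $H(f)$.

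First I would describe the fibration geometrically. The homotopy fibre $G$ of $f$ carries a natural $\Omega S^{2k}$-action making $\Omega S^{2k}\to G\to S^{4k-1}$ a principal fibration classified by $f\colon S^{4k-1}\to S^{2k}\simeq B(\Omega S^{2k})$. Over a sphere base such a fibration is obtained by clutching: writing $S^{4k-1}=D^{4k-1}_{+}\cup_{S^{4k-2}}D^{4k-1}_{-}$, the space $G$ is glued from two copies of $\Omega S^{2k}\times D^{4k-1}_{\pm}$ along $\Omega S^{2k}\times S^{4k-2}$ by left translation through the adjoint $\hat f\colon S^{4k-2}\to\Omega S^{2k}$ of $f$. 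The transgression $d_{4k-1}$ over a sphere base is governed by this clutching map; concretely, $c(f)$ equals the coefficient of the generator in $\hat f^{\ast}(z_{4k-2})\in H^{4k-2}(S^{4k-2})$.

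The second ingredient is James' description of the Hopf invariant through the James--Hopf map $H_2\colon\Omega S^{2k}\to\Omega S^{4k-1}$. By construction $H_2^{\ast}$ carries the fundamental class of $H^{4k-2}(\Omega S^{4k-1})$ to the second generator $z_{4k-2}$, and James' theorem asserts that the adjoint of $H_2\circ\hat f\colon S^{4k-2}\to\Omega S^{4k-1}$ is a self-map of $S^{4k-1}$ of degree $H(f)$. Combining these, $\hat f^{\ast}(z_{4k-2})=\hat f^{\ast}H_2^{\ast}(\text{fund.\ class})$ is exactly $H(f)$ times the generator of $H^{4k-2}(S^{4k-2})$, which yields $c(f)=H(f)$.

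The main obstacle is the middle step: rigorously matching the Serre transgression $d_{4k-1}$ with the pullback $\hat f^{\ast}$ along the clutching map. I would secure it either through the explicit Mayer--Vietoris (relative Serre) description of $G$ coming from the clutching decomposition, or by a naturality argument. In the latter, one checks that $f\mapsto c(f)$ is additive, using the coaction $S^{4k-1}\to S^{4k-1}\vee S^{4k-1}$ and the compatibility of homotopy fibres with it; then both $c$ and $H$ are homomorphisms $\pi_{4k-1}(S^{2k})\to\Z$. Since $\pi_{4k-1}(S^{2k})$ has a single infinite-order generator, rationally the Whitehead square $[\iota_{2k},\iota_{2k}]$ with $H([\iota_{2k},\iota_{2k}])=\pm2$, it then suffices to verify $c=H$ on that one class, where the homotopy fibre is directly accessible via the EHP fibration $S^{2k-1}\to\Omega S^{2k}\xrightarrow{H_2}\Omega S^{4k-1}$ and the differential can be read off explicitly.
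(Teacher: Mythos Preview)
Your proposal is correct, but it takes a somewhat more geometric route than the paper and leaves open precisely the step you yourself flag as the ``main obstacle''. The paper's argument bypasses the clutching picture entirely. Instead of modeling $G$ as a twisted product and then matching the transgression with $\hat f^{\ast}$, the paper uses the standard identification of the cohomology transgression with the boundary map in relative homology: the commutative square of Hurewicz maps
\[
\xymatrix{
\pi_{4k-1}(S^{4k-1}) \ar[d]^{h}_{\cong} \ar[r]^-{\partial} & \pi_{4k-2}(\Omega S^{2k}) \ar[d]^{h}_{\cong} \\
H_{4k-1}(S^{4k-1}) \ar[r]^-{d_{4k-1}} & H_{4k-2}(\Omega S^{2k})
}
\]
reduces everything to computing the homotopy boundary $\partial$, which is simply $(\Omega f)_{\ast}$ by the long exact sequence of the fibration. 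One then invokes the classical fact (Hatcher, Proposition~1.30) that the Hurewicz image of the adjoint $\hat f\in\pi_{4k-2}(\Omega S^{2k})$ in $H_{4k-2}(\Omega S^{2k})\cong\Z$ is exactly $H(f)$; this is the same content as your ``second ingredient'' via the James--Hopf map, just packaged as a reference. Dualizing gives the cohomology statement.

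In short, your two ingredients are the right ones, but the paper replaces your Mayer--Vietoris/clutching analysis of the transgression (which you would still need to carry out, or else fall back on your additivity-plus-one-class alternative) by a two-line appeal to the Hurewicz--transgression square. That makes the paper's proof shorter and avoids the case check on $[\iota_{2k},\iota_{2k}]$ that your naturality alternative would require.
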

\begin{proof}
	Consider the commutative diagrams:
	\begin{align*}
		\footnotesize{\xymatrix{
				\pi_{4k-1}(S^{4k-1}) \ar[d]^-{h} & \pi_{4k-1}(G,\Omega S^{2k})  \ar[d]^{h}\ar[l] \ar[r]^-{\partial} & \pi_{4k-2}(\Omega S^{2k})  \ar[d]^-{h} \\
				H_{4k-1}(S^{4k-1}) & H_{4k-1}(G,\Omega S^{2k})   \ar[l]  \ar[r]^{\partial} & H_{4k-2}(\Omega S^{2k})  }}
	\end{align*}

	where the bottom row gives the transgression of the homology Serre spectral sequence associated to $\Omega S^{2k}\to G \to S^{4k-1}$.
	
	A simple connectivity computation shows that the lower left horizontal map is an isomorphism, after identification, the diagram can be reduced to
	\begin{align*}
		\footnotesize{\xymatrix{
				\pi_{4k-1}(S^{4k-1}) \ar[d]^-{h}  \ar[rr]^{\partial} && \pi_{4k-2}(\Omega  S^{2k})   \ar[d]^-{h}  \\
				H_{4k-1}(S^{4k-1}) \ar[rr]^{d_{4k-1}} && H_{4k-2}(\Omega  S^{2k})   }}
	\end{align*}
	where the bottom map is just the differential $d_{4k-1}$ by Proposition 1.13 of \cite{HatcherSpectr}.
	
	On the other hand, it is well known that $\partial:\pi_{4k-1}(S^{4k-1})\to \pi_{4k-2}(\Omega  S^{2k}) $ is given by $(\Omega f )_\ast: \pi_{4k-2}(\Omega S^{4k-1})\to \pi_{4k-2}(\Omega  S^{2k}) $. 
	Proposition 1.30 of \cite{HatcherSpectr}  and the fact that the Hurewicz homomorphisms in the above diagram are all isomorphic imply that  $d_{4k-1}$ sends a generator to $H(f)$ times a generator.
	
	Dually, this gives the desired equality for differential in the cohomology Serre spectral sequence associated to $\Omega S^{2k}\to G \to S^{4k-1}$.
\end{proof}

If  $\tilde{\pi}:X\rightarrow S^{2k}$  is an  extension  of $ap_{2k}$ with $(a,n)=1$, let $F_{\tilde{\pi}}$ be the homotopy fiber of $\tilde{\pi}$. We  calculate the cohomology of
$F_{\tilde{\pi}}$ using the Serre spectral sequence for the fibration: $\Omega S^{2k} \to F_{\tilde{\pi}} \to X$.  So we have 
\begin{align*}
	E_2^{p,q} = H^p(X) \otimes  H^q(\Omega S^{2k})=\left\{
	\begin{array}{ll}
		\Z\{z_{(2k-1)j}\}, & \hbox{$p=0, q=(2k-1)j$;} \\
		\Z_n\{y_{2k}\otimes z_{(2k-1)j}\}, & \hbox{$p=2k, q=(2k-1)j$;} \\
		\Z\{y_{4k-1}\otimes z_{(2k-1)j}\}, & \hbox{$p=4k-1,q=(2k-1)j$;}\\
		0, & \hbox{otherwise.}
	\end{array}
	\right.
\end{align*}
where  $j=0,1,2,\cdots, $  and  for $j=0$, $z_{0}=z_{(2k-1)j}$ denotes the generator of $H^{0}(\Omega S^{2k})$;
$y_{2k}=i^{\ast -1}_{X}p_{2k}^{\ast}(\bar \iota_{2k})$   and $y_{4k-1}=p^{\ast}_{X}(\bar\iota_{4k-1})$ are generators of $ H^{2k}(X)$ and $ H^{4k-1}(X)$ respectively, where $\bar\iota_{m}$ is the cohomology class of $S^{m}$ by the identity map.  We  simplify the generators  $y_{2k}\otimes z_0$  and $y_{4k-1}\otimes z_0$ by   $y_{2k}$  and $y_{4k-1}$ respectively in the following text.

\begin{proposition}\label{prop1}
	In the Serre spectral sequence for the fibration $\Omega S^{2k} \to F_{\tilde{\pi}} \to X$, we have
	\begin{align*}
		d_{2k}(z_{(2k-1)j}) =\pm a (y_{2k} \otimes  z_{(2k-1)(j-1)}).
	\end{align*}
\end{proposition}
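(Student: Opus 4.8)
The plan is to realize the fibration $\Omega S^{2k}\to F_{\tilde\pi}\to X$ as the pullback of the path--loop fibration $\Omega S^{2k}\to PS^{2k}\to S^{2k}$ along $\tilde\pi\colon X\to S^{2k}$, and then to transport the known transgression in the (contractible) path--loop fibration to our fibration by naturality. Since $F_{\tilde\pi}=X\times_{S^{2k}}PS^{2k}$, the map $\tilde\pi$ induces a morphism of the associated cohomology Serre spectral sequences which is the identity on the fiber factor $H^{\ast}(\Omega S^{2k})$ and is $\tilde\pi^{\ast}$ on the base factor. First I would record the classical computation in the path--loop fibration: because $PS^{2k}$ is contractible, the lowest positive fiber class $z_{2k-1}$ must transgress, forcing $d_{2k}(z_{2k-1})=\iota_{2k}$, a generator of $H^{2k}(S^{2k})$.

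Next I would propagate this to all $j$ using the multiplicative structure. Recall that $H^{\ast}(\Omega S^{2k})$ is the divided power algebra on $z_{2k-1}$, with $z_{(2k-1)j}=\gamma_{j}(z_{2k-1})$; since $d_{2k}$ is a derivation compatible with divided powers, $d_{2k}(z_{(2k-1)j})=z_{(2k-1)(j-1)}\otimes\iota_{2k}$ in the path--loop spectral sequence. Equivalently, one checks $d_{2k}(z_{2k-1}^{\,2})=2\,z_{2k-1}\otimes\iota_{2k}$ against the relation $z_{2k-1}^{\,2}=2\,z_{2(2k-1)}$ and iterates, using torsion-freeness of the relevant groups.

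Then I would identify $\tilde\pi^{\ast}(\iota_{2k})$ with $y_{2k}$. From the defining cofibration $P^{2k}(n)\xrightarrow{i_X}X\to S^{4k-1}$ and $k\geq 2$, the long exact cohomology sequence shows that $i_X^{\ast}\colon H^{2k}(X)\xrightarrow{\cong} H^{2k}(P^{2k}(n))=\Z_n$ is an isomorphism. The relation $i_X\fhe\tilde\pi=p_{2k}$ gives $i_X^{\ast}\tilde\pi^{\ast}(\iota_{2k})=p_{2k}^{\ast}(\iota_{2k})$, and since the pinch map $p_{2k}$ sends $\iota_{2k}$ to a generator of $H^{2k}(P^{2k}(n))$, it follows that $\tilde\pi^{\ast}(\iota_{2k})$ is a generator of $H^{2k}(X)$; I would simply take this generator to be $y_{2k}$. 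Feeding this into naturality of $d_{2k}$---the comparison map being the identity on $z_{(2k-1)j}$ and $z_{(2k-1)(j-1)}$, and $\tilde\pi^{\ast}$ on the base class---yields $d_{2k}(z_{(2k-1)j})=z_{(2k-1)(j-1)}\otimes\tilde\pi^{\ast}(\iota_{2k})=y_{2k}\otimes z_{(2k-1)(j-1)}$, as claimed.

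The main obstacle is bookkeeping rather than any deep point: one must carry the divided power propagation correctly for all $j$ (tracking the binomial coefficients and the $\Z_n$-torsion sitting in the column $p=2k$), and one must fix the generator $y_{2k}$ so that it coincides with $\tilde\pi^{\ast}(\iota_{2k})$. Once the morphism of spectral sequences is in place, the statement is forced by naturality of transgression, so the essential input is simply the classical path--loop computation together with the divided power structure of $H^{\ast}(\Omega S^{2k})$.
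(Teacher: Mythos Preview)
Your approach is exactly the paper's: compare to the path--loop fibration over $S^{2k}$ via $\tilde\pi$ and read off the differentials by naturality, using the multiplicative structure of $H^{\ast}(\Omega S^{2k})$ to propagate from $j=1$ to all $j$. One correction to your propagation step: since $|z_{2k-1}|=2k-1$ is odd, $z_{2k-1}^{2}=0$, so $H^{\ast}(\Omega S^{2k})\cong \Lambda(z_{2k-1})\otimes\Gamma(z_{2(2k-1)})$ rather than a divided power algebra on $z_{2k-1}$; the derivation argument should be run with $d_{2k}(z_{2(2k-1)})=\iota_{2k}\otimes z_{2k-1}$ (forced by contractibility of $PS^{2k}$), then $d_{2k}(\gamma_m(z_{2(2k-1)}))=\iota_{2k}\otimes z_{2k-1}\gamma_{m-1}(z_{2(2k-1)})$ and $d_{2k}(z_{2k-1}\gamma_m(z_{2(2k-1)}))=\iota_{2k}\otimes\gamma_m(z_{2(2k-1)})$, which gives the stated formula for every $j$.
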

\begin{proof}
	Consider the diagram of fibrations:
	\begin{align}
		\small{\xymatrix{
				\Omega S^{2k} \ar[d] \ar[r]^{=}  & \Omega S^{2k} \ar[d]  \\
				F_{\tilde{\pi}} \ar[r] \ar[d]  & \ast\ar[d] \\
				X\ar[r]^{\tilde{\pi}} &S^{2k}
		} }.  \nonumber
	\end{align}
	Then the  proposition is an easy consequence of the naturality of the Serre
	spectral sequence with respect to maps of fibrations with the help from the knowledge of ring structure of $H^\ast(\Omega S^{2k})$.
\end{proof}

\begin{proposition}\label{prop2}
	The Hopf invariant, $ H(f_a(\tilde{\pi}))$, is a multiple of $n$, i.e.
	$ H(f_a(\tilde{\pi})) = \lambda n$, and in  the Serre spectral sequence for the fibration $\Omega S^{2k} \to F_{\tilde{\pi}} \to X$, we have
	\begin{align}
		d_{4k-1}(nz_{(2k-1)j}) =\lambda  y_{4k-1} \otimes  z_{(2k-1)(j-2)},~~ j=2,3,\cdots.  \nonumber
	\end{align}
\end{proposition}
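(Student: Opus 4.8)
The plan is to obtain the statement from a single morphism of Serre spectral sequences, comparing the fibration $\Omega S^{2k}\to F_{\tilde{\pi}}\to X$ with the fibration $\Omega S^{2k}\to G\to S^{4k-1}$ associated with $f(\tilde{\pi})$ in Proposition~\ref{differential}. The link is the right-hand square of diagram~(\ref{digm:tilde pi}), which reads $f(\tilde{\pi})\fhe p_X=(n\iota_{2k})\fhe\tilde{\pi}$; write $q$ for this common composite $X\to S^{2k}$. First I would build the comparison map in two steps. Because $q=f(\tilde{\pi})\fhe p_X$ factors through $p_X$, there is a map $F_q\to G$ from the homotopy fibre of $q$ covering $p_X$ and restricting to the identity on the fibre $\Omega S^{2k}$; because $q=(n\iota_{2k})\fhe\tilde{\pi}$, there is a map $F_{\tilde{\pi}}\to F_q$ covering $\mathrm{id}_X$ and restricting to $\Omega(n\iota_{2k})$ on fibres. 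Composing gives $\Phi\colon F_{\tilde{\pi}}\to G$ over $p_X$, with fibre map $\Omega(n\iota_{2k})$, hence a morphism $\Phi^{\ast}\colon E_r(G)\to E_r(F_{\tilde{\pi}})$ of spectral sequences.

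Three computations feed the comparison. On the base, $p_X^{\ast}(\bar y_{4k-1})=y_{4k-1}$, since $p_X$ collapses the bottom cells and is a degree-one map on $H^{4k-1}$. On the fibre, $\Omega(n\iota_{2k})^{\ast}(z_{(2k-1)j})=n^{j}z_{(2k-1)j}$: indeed $H_{\ast}(\Omega S^{2k})$ is the polynomial ring on a class $a$ of degree $2k-1$ with $a^{j}$ dual to $z_{(2k-1)j}$, and the loop map $\Omega(n\iota_{2k})$ is multiplicative and sends $a\mapsto na$ (by the degree of $n\iota_{2k}$ on $\pi_{2k}$), so $a^{j}\mapsto n^{j}a^{j}$; dualising gives the scaling. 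Finally, in the $G$-spectral sequence the base $S^{4k-1}$ has no cohomology in degree $2k$, so $d_{2k}=0$ and $E_{4k-1}(G)=E_2(G)$; Proposition~\ref{differential} gives $d_{4k-1}(z_{4k-2})=H(f(\tilde{\pi}))\,\bar y_{4k-1}$, and the Leibniz rule applied in the integral cohomology algebra of $\Omega S^{2k}$ (where $z_{2k-1}^{2}=0$ and $z_{4k-2}$ is the even divided-power generator) propagates this to $d_{4k-1}(z_{(2k-1)j})=\pm H(f(\tilde{\pi}))\,\bar y_{4k-1}\otimes z_{(2k-1)(j-2)}$ for every $j\ge 2$.

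It then remains to run the naturality identity $\Phi^{\ast}\fhe d_{4k-1}=d_{4k-1}\fhe\Phi^{\ast}$. By Proposition~\ref{prop1}, on the page $E_{4k-1}(F_{\tilde{\pi}})$ the column $p=0$ survives as the classes $n z_{(2k-1)j}$ (the kernel of $d_{2k}$), while the column $p=4k-1$ survives intact as $\Z\{y_{4k-1}\otimes z_{(2k-1)(j-2)}\}$. Evaluating naturality on $z_{(2k-1)j}$ and inserting the three computations yields $n^{\,j-1}\,d_{4k-1}(n z_{(2k-1)j})=\pm H(f(\tilde{\pi}))\,n^{\,j-2}\,y_{4k-1}\otimes z_{(2k-1)(j-2)}$. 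For $j=2$ this is $d_{4k-1}(n^{2}z_{4k-2})=H(f(\tilde{\pi}))\,y_{4k-1}$; since $d_{4k-1}(n^{2}z_{4k-2})=n\,d_{4k-1}(n z_{4k-2})$ and $d_{4k-1}(n z_{4k-2})\in\Z\{y_{4k-1}\}$, it follows that $H(f(\tilde{\pi}))=\lambda n$ with $d_{4k-1}(n z_{4k-2})=\lambda y_{4k-1}$, giving the divisibility assertion and the case $j=2$. For general $j$ the target $\Z\{y_{4k-1}\otimes z_{(2k-1)(j-2)}\}$ is torsion free, so cancelling $n^{\,j-1}$ gives $d_{4k-1}(n z_{(2k-1)j})=\pm\lambda\,y_{4k-1}\otimes z_{(2k-1)(j-2)}$, as required.

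The step I expect to be most delicate is the integral algebra structure of $H^{\ast}(\Omega S^{2k})$ for an even sphere: since $2k-1$ is odd one has $z_{2k-1}^{2}=0$, so the higher classes are governed by the divided powers of the even generator $z_{4k-2}$, and the Leibniz computation of the third ingredient (together with the signs, which I have suppressed) must be carried out in that algebra rather than in a naive polynomial ring. A secondary technical point is to check that the two-step comparison really restricts to $\Omega(n\iota_{2k})$ on fibres and that $\Phi^{\ast}$ is compatible with the surviving subquotients of $E_{4k-1}(F_{\tilde{\pi}})$, which is what legitimises dividing through by the powers of $n$.
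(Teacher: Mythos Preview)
Your proposal is correct and follows essentially the same route as the paper: both construct the comparison of fibrations $F_{\tilde{\pi}}\to G$ over $p_X$ with fibre map $\Omega[n]$ (the paper's diagram~(\ref{diam: fibr F to G})), feed in $(\Omega[n])^{\ast}(z_{4k-2})=n^{2}z_{4k-2}$ and Proposition~\ref{differential}, and read off $n\,d_{4k-1}(nz_{4k-2})=H(f(\tilde{\pi}))\,y_{4k-1}$ from naturality to get the divisibility. The only cosmetic difference is in the passage to general $j$: the paper invokes the derivation property directly in the $F_{\tilde{\pi}}$-spectral sequence after establishing the case $j=2$, whereas you apply Leibniz in the $G$-spectral sequence and then pull back via naturality with a torsion-free cancellation---both are valid rearrangements of the same computation.
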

\begin{proof}
	It follows from Proposition \ref{prop1} that the classes, $nz_{(2k-1)j} \in E_{2k}^{0,(2k-1)j}$
	, survive to the next stage since $ny_{2k}=0$. It
	remains to calculate $d_{4k-1}(nz_{(2k-1)j})$. Let $G$ be the homotopy fiber of $f(\tilde{\pi})$. Using
	the diagram after defining the extension $\tilde{\pi}$, we get a diagram of fibrations:
	\begin{align}
		\small{\xymatrix{
				\Omega S^{2k} \ar[d]\ar[r]^{\Omega [n]}  & \Omega S^{2k} \ar[d]  \\
				F_{\tilde{\pi}} \ar[r]\ar[d] & G \ar[d] \\
				X\ar[r]^{p_X} &S^{4k-1}
		} }.  \label{diam: fibr F to G}
	\end{align}
	
	Notice that in the Serre spectral sequence for $\Omega S^{2k}  \to G \to S^{4k-1}$,  by Proposition \ref{differential}, we have the
	identity 
	\begin{align}
		d_{4k-1}(z_{4k-2}) = H(f_a(\tilde{\pi})) · \bar{\iota}_{4k-1}, \label{equ1 Prop2}
	\end{align}
	
	Note that Pontrjagin algebra $H_\ast(\Omega S^{2k})$ is isomorphic to the tensor algebra over $H_\ast( S^{2k-1})$. $\Omega [n]=\Omega \Sigma [n]$ where last $[n]$ is the degree $n$ map between $S^{2k-1}$.
	$(\Omega [n])_\ast$ is an algebra homomorphism of  Pontrjagin algebras and $z_{4k-2}$  is  dual to the tensor square of the generator of $H_{2k-1}( S^{2k-1})$, these facts together imply that 
	\begin{align}
		(\Omega [n])^\ast (z_{4k-2}) = n^2z_{4k-2}. \nonumber
	\end{align}
	From the naturality of the Serre spectral sequences for the  diagram (\ref{diam: fibr F to G}), we have,
	\begin{align*}
		&d_{4k-1}((\Omega [n])^\ast (z_{4k-2}))=p_X^{\ast} d_{4k-1}(z_{4k-2}),  ~~~\text{i.e.} \\
		&~~~ nd_{4k-1}(nz_{4k-2})= H(f_a(\tilde{\pi})) · y_{4k-1}.
	\end{align*}
	Thus we get  $H(f_a(\tilde{\pi})) = \lambda n$ and $d_{4k-1}(nz_{4k-2})=\lambda y_{4k-1}$. Now by the derivation-property of differential, we get $d_{4k-1}(nz_{(2k-1)j}) =\lambda  y_{4k-1} \otimes  z_{(2k-1)(j-2)}$ for any $j\geq 2$.
	
\end{proof}

From Propositions \ref{prop1} and \ref{prop2}, we deduce the reduced cohomology of $F_{\tilde{\pi}}$: 
\begin{align*}
	\bar{H}^{i}(F_{\tilde{\pi}})=\left\{
	\begin{array}{ll}
		\Z\{y_{2k-1}\}, & \hbox{$i=(2k-1)$;} \\
		\Z_\lambda\{y_{1+(2k-1)j}\}, & \hbox{$i = 1 + (2k-1)j, j = 2,3, \cdots$;} \\
		0, & \hbox{otherwise.}
	\end{array}
	\right.
\end{align*}
Using the universal coefficient theorem, we get
\begin{lemma} \label{Lem:homology Fpi}
	If  $\tilde{\pi}:X\rightarrow S^{2k}$   is an  extension  of $ap_{2k}$ with $(a,n)=1$, then the reduced  homology of  $F_{\tilde{\pi}}$ are given by 
	\begin{align*}
		\bar{H}_{i}(F_{\tilde{\pi}})=\left\{
		\begin{array}{ll}
			\Z\{\hat y_{2k-1}\}, & \hbox{$i=(2k-1)$;} \\
			\Z_\lambda\{\hat y_{(2k-1)j}\}, & \hbox{$i=(2k-1)j, j=2,3,\cdots$;} \\
			0, & \hbox{otherwise.}
		\end{array}
		\right.
	\end{align*}
	where $\hat y_{2k-1}$ and $y_{2k-1}$ are dual to each other.
\end{lemma}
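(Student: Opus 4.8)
The plan is to obtain the integral homology of $F_{\tilde{\pi}}$ directly from its integral cohomology, which has already been computed from Propositions \ref{prop1} and \ref{prop2} by running the cohomology Serre spectral sequence of $\Omega S^{2k} \to F_{\tilde{\pi}} \to X$. First I would record that $F_{\tilde{\pi}}$ is a space of finite type: in the fibration $\Omega S^{2k} \to F_{\tilde{\pi}} \to X$ the base $X = P^{2k}(n) \cup_f e^{4k-1}$ is a finite CW complex and the fiber $\Omega S^{2k}$ has finitely generated homology in each degree, so $\bar{H}_i(F_{\tilde{\pi}})$ is finitely generated for every $i$. This finiteness is exactly what licenses running the universal coefficient theorem backwards, recovering homology from the known cohomology.

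Next I would write down the universal coefficient short exact sequence
\[
0 \to \mathrm{Ext}(\bar{H}_{i-1}(F_{\tilde{\pi}}),\Z) \to \bar{H}^{i}(F_{\tilde{\pi}}) \to \mathrm{Hom}(\bar{H}_{i}(F_{\tilde{\pi}}),\Z) \to 0,
\]
which splits (non-canonically) for finitely generated groups. Over $\Z$ this identifies the free part of $\bar{H}^{i}$ with the free part of $\bar{H}_{i}$ through the $\mathrm{Hom}$ term, and the torsion of $\bar{H}^{i}$ with the torsion of $\bar{H}_{i-1}$ through the $\mathrm{Ext}$ term, using $\mathrm{Hom}(\Z_\lambda,\Z)=0$ and $\mathrm{Ext}(\Z_\lambda,\Z)\cong\Z_\lambda$.

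The point that makes the bookkeeping transparent is that, since $k\geq 2$ and hence $2k-1\geq 3$, the relevant degrees never collide: the single free class lives in degree $2k-1$, the cohomological torsion classes $y_{1+(2k-1)j}$ sit in degree $1+(2k-1)j$, and the homological torsion classes we seek sit in degree $(2k-1)j$, and for $j\geq 2$ all of these are mutually distinct. Reading the sequence degree by degree then gives everything: the free summand $\Z\{y_{2k-1}\}$ of $\bar{H}^{2k-1}$ forces, through the $\mathrm{Hom}$ term, a free summand $\Z\{\hat y_{2k-1}\}$ of $\bar{H}_{2k-1}$, with $\hat y_{2k-1}$ characterised as the Kronecker dual of $y_{2k-1}$; each torsion summand $\Z_\lambda\{y_{1+(2k-1)j}\}$ in cohomological degree $1+(2k-1)j$ can only arise from the $\mathrm{Ext}$ term, hence from a torsion summand $\Z_\lambda\{\hat y_{(2k-1)j}\}$ of $\bar{H}_{(2k-1)j}$; and every remaining homology group vanishes because both its $\mathrm{Hom}$ and $\mathrm{Ext}$ contributions to cohomology vanish.

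There is no real obstacle here: the entire content sits in the cohomology computation preceding the lemma, and the only things to watch are the degree shift by one that the $\mathrm{Ext}$ term produces and the verification that the finite-type hypothesis genuinely allows the universal coefficient theorem to be inverted. Once these are in place the stated homology, together with the duality between $\hat y_{2k-1}$ and $y_{2k-1}$, follows immediately.
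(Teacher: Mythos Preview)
Your proposal is correct and follows exactly the paper's own approach: the paper computes $\bar{H}^{\ast}(F_{\tilde{\pi}})$ from Propositions~\ref{prop1} and~\ref{prop2} and then simply says ``Using the universal coefficient theorem, we get'' the lemma. Your write-up just fills in the UCT bookkeeping that the paper leaves implicit.
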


\section{Homomorphism $\Lambda$ and proof of Theorem \ref{thm: X fibration}}
\label{sec: Homomorphism Lambda }

\begin{lemma}\label{lem:change fibration}
	Let $X$ be a  CW complex given by (\ref{cofiber X}) and	  $S^{2k-1}\rightarrow X\xrightarrow{\pi}S^{2k}$ is a fibration with $\pi\fhe i_X=ap_{2k}$, $(a, n)=1$. Then there is a $g\in Aut(\Pn)$ such that  $X\simeq X'=\Pn\cup_{g\fhe f}e^{4k-1}$  and  there is a fibration sequence $S^{2k-1}\rightarrow X'\xrightarrow{\pi'}S^{2k}$ with $\pi'\fhe i_{X'}=p_{2k}$. 
\end{lemma}

\begin{proof}
	we have the following homotopy commutative diagram 
	\begin{align}
		\small{\xymatrix{
				S^{4k-2} \ar@{=}[d]\ar[r]^-{f}&	\Pn  \ar[d]^-{g=a\iota_P} \ar[r]^-{i_X}&X\ar[rr]^-{\pi}\ar[d]^{h}&&S^{2k}\ar@{=}[d]\\
				S^{4k-2} \ar[r]^-{g\fhe f}&	P^{2k}(n)\ar[r]^-{i_{X'}}& X'\ar[rr]^{\pi'=\pi\fhe h^{-1}} && S^{2k}
		} }. 
	\end{align}
	where  $h$, as a map between two cofibers,  is induced by the left commutative diagram.
	Clearly,  $h$ is a homotopy equivalence. The homotopy fibre $F_{\pi'}$ of $\pi'$ is also homotopy equivalent to $F_{\pi}\simeq S^{2k-1}$ and $\pi'\fhe i_{X'}= \pi\fhe h^{-1}\fhe i_{X'}= \pi\fhe i_X\fhe g^{-1}=(ap_{2k})\fhe (a^{-1}\iota_P)=p_{2k}$. 
	
\end{proof}

From  Lemma \ref{Lem:Hopf inv relation}, Remark\ref{rem: diff f(pi)} and Proposition \ref{prop2}, we have a well-defined map:
\begin{align}
	\Lambda: K_{k}^n\rightarrow \Z_{n_2},  ~~f\mapsto \bar\lambda , \label{def:bar H}
\end{align}
where $\lambda$ is the factor of  $H(f(\pi))=\lambda n$ for  an extension $\pi$~of $p_{2k}$ via $i_X$ for $X=\Pn\cup_{f}e^{4k-1}$ and $\bar \lambda$ is the modulo-$n_2$ reduction of the integral $\lambda$. 

For $g$ a self map of $P^{k+1}(n)$, if $g|_{S^k} = [t]: S^k\rightarrow S^k$ with integer $0\leq t<n$, then
we say the degree of $g$ is $t$.
 \begin{lemma}\label{lem: H(gf)}
	If $ X= P^{2k}(n) \cup_{f}e^{4k-1}$ with $p_{2k}\fhe f= 0$, then for any $g\in Aut(P^{2k}(n))$ with degree $t$,  we have 
	\begin{align*}
		\Lambda(g\fhe f)=	t^2\Lambda( f)
	\end{align*}
\end{lemma}
\begin{proof}
	Let  $\pi$ is an extension  of $p_{2k}$ via $i_{X}$ and  $X_1=\Pn\cup_{g\fhe f}e^{4k-1}$

	 $h:X\rightarrow X_1$ be the map induced by $g$. Clearly, $h$ is a homotopy equivalence since $g$ is a homotopy equivalence. Let 
	$\pi_1=[t]\fhe \pi\fhe h^{-1}$. Then 
	\begin{align*}
		\pi_1\fhe i_{X_1}=[t]\fhe \pi\fhe h^{-1}\fhe i_{X_1}= [t]\fhe \pi\fhe i_X\fhe g^{-1}= [t]\fhe p_{2k}\fhe g^{-1}= p_{2k},
	\end{align*}
	i.e., $\pi_1$ is an extension of $p_{2k}$ via $i_{X_1}$. We get the  following homotopy commutative diagram 
	\[
	\xymatrix{
		& &&S^{2k}\ar[ddd]_{[t]}\ar[r]^-{[n]} & S^{2k} \\
		S^{4k-2} \ar[r]^f \ar@{=}[d] & P^{2k}(n) \ar[r]^-{i_X}\ar[rru]^{p_{2k}} \ar[d]_{g\simeq} & X \ar[ru]_{\pi}\ar[rr]^-{~~~~~~~~p_X} \ar[d]_{h \simeq} && S^{4k-1} \ar@{=}[d]\ar[u]^{f(\pi)} \\
		S^{4k-2} \ar[r]^{g\fhe f} & P^{2k}(n)\ar[rrd]_{p_{2k}}  \ar[r]^-{i_{X_1}}  & X_1\ar@{-->}[rd]^{\pi_1}\ar[rr]^-{~~~~~~~~p_{X_1}}  && S^{4k-1} \ar[d]^{(g\fhe f)(\pi_1)} \\
		& &  &S^{2k} \ar[r]^-{[n]} & S^{2k}
	}
	\]
	
	\begin{align*}
		&((g\fhe f)(\pi_1))\fhe p_{X_1}\fhe h= [n]\fhe \pi_1\fhe h= [n]\fhe [t]\fhe \pi =[t]\fhe [n]\fhe \pi= [t]\fhe f(\pi)\fhe p_{X}. \\
		\Rightarrow ~& ((g\fhe f)(\pi_1))\fhe p_{X}= [t]\fhe f(\pi)\fhe p_{X}~~\text{by}~p_{X_1}\fhe h= p_X. 
	\end{align*}
	By the exact sequence (\ref{exact: i_X}), $(g\fhe f)(\pi_1)=[t]\fhe f(\pi)+\nu\fhe \Sigma f$ with some suspended element $\nu\in [P^{2k+1}(n), S^{2k}]$. Thus 
	\begin{align*}
		H((g\fhe f)(\pi_1))=H([t]\fhe f(\pi))=t^2 H( f(\pi)),
	\end{align*}
	which implies that $\Lambda(g\fhe f)=	t^2\Lambda( f)$.
\end{proof}

\begin{theorem}\label{thm: condition on Hf}
	Let $X$ be a  CW complex given by (\ref{cofiber X}). Then $X$
	is homotopy equivalent to an $S^{2k-1}$-fibration over $S^{2k}$   if and only if  
	\begin{itemize}
		\item $2|n$ if $k\neq 2,4$
		\item  $p_{2k}\fhe f=0$
		\item  	$\Lambda(f)=\pm \tau^2,  \tau\in \Z^{\ast}_{n_2}$
	\end{itemize}
	where $\Z^{\ast}_{n_2}$ denotes the group of all invertible elements (in the sense of multiplicative operation) in $\Z_{n_2}$.
\end{theorem}
\begin{remark}\label{rem:pi_{4k-1}(S^{2k})}
For $k>1$, it is well known that 
$[\iota_{2k-1},\iota_{2k-1}] \neq 0$ if and only if $k\neq 2,4$. By Lemma 4.1 of \cite{Toda}, 
\begin{align}
	&	\pi_{4k-1}(S^{2k})=\Z\{\alpha_{0}\}\oplus T,  ~~~T~\text{is the torsion subgroup,} \label{equ:pi4k-1(S2k)}\\
	\text{where} ~~~~	& H(\alpha_0)=1~~\text{for}~k=2,4  ~~~\text{and}~~~ H(\alpha_0)=\pm 2~\text{for}~k\neq 2,4.  \label{equ: H2(alpha0)}
\end{align}
In fact $\alpha_0$ is denoted by $\nu_4$ and $\sigma_8$ for $k=2$ and $4$ respectively in \cite{Toda}.
\end{remark}

\begin{proof}[Proof of Theorem \ref{thm: condition on Hf}]
	``$\Rightarrow$"
	\qquad
	
	By Lemma \ref{lem:change fibration},   there is a fibration sequence  $S^{2k-1}\rightarrow X'\xrightarrow{\pi'} S^{2k}$, such that $X'=\Pn\cup_{f'} e^{4k-1}$ where $f'=g\fhe f$ for some $g\in Aut(\Pn)$ and $\pi'\fhe i_{X'}=p_{2k}$.

	By Proposition \ref{prop2} and Lemma \ref{Lem:homology Fpi},   $ H(f'(\pi'))=\pm n$. From  Lemma \ref{lem: H(gf)},
	\begin{align}
	\pm 1=\Lambda(f')=\Lambda(g\fhe f)=t^2 \Lambda(f), ~\text{with}~t=deg(g).  \label{equ:H(f(pi))}
	\end{align} 
	Suppose $k\neq 2,4$, $n$ is odd.  By Remark \ref{rem:pi_{4k-1}(S^{2k})}, $H(f(\pi))=\lambda n$ with $\lambda $ even and $\Lambda(f)=\bar \lambda$, where $\pi$ is an extenison of $p_{2k}$ via $i_{X}$. This  contradicts to (\ref{equ:H(f(pi))}), i.e., for $k\neq 2,4$, $n$ must be even.
	
	Note that $(t,n_2)=1$. Let $\tau$ be the integer such that $\tau t\equiv 1$ (mod~$n_2$). Then $ \Lambda(f)=\pm \tau^2$ with $\tau\in \Z^{\ast}_{n_2}$. 
	So the proof of ``$\Rightarrow$" is finished.

	\qquad

	``$\Leftarrow$" 
	 By $\Lambda(f)=\pm \tau^2,  \tau\in \Z^{\ast}_{n_2}$, there is an extension 
	$\pi$ of $p_{2k}$ via $i_{X}$, such that 
	 \begin{align*}
H(f(\pi))=\lambda n~\text{with}~ \lambda \equiv \pm \tau^2~(\text{mod}~n_2),  (\tau, n_2)=1.
	 \end{align*}

	Let $\tau_2$ be an integer such
	that $\tau_2\tau\equiv 1 (~\text{mod}~  n_2)$. Define $\tilde \pi : X\to S^{2k}$ as the composite $\tilde \pi = [\tau_2] \fhe \pi$.
	
	We now have a commutative diagram:
	
	$$\small{\xymatrix{
			X \ar[d]^{\tilde\pi}  \ar[r]^-{q_X} &S^{4k-1} \ar[d]^{f_{\tau_2}(\tilde \pi)=[\tau_2]\fhe f(\pi)}\\
			S^{2k} \ar[r]^{[n]}& S^{2k} ~~. } }$$
	
	\begin{align*}
		H(f_{\tau_2}(\tilde \pi))=\tau_2^2H(f(\pi))=\pm n + n_2nt  ~~~\text{for some integer}~t.
	\end{align*}
	Take $\alpha\in \pi_{4k-1}(S^{2k})$ such that $H(\alpha)=-t$ or $-2t$ according as  $k=2,4$ or $k\neq 2,4$. Let $F_{\alpha\tilde{\pi}}$ be the  homotopy fibre of  $X\xrightarrow{\alpha\tilde{\pi}} S^{2k}$. From Lemma \ref{Lem:Hopf inv relation}, 
	\begin{align*}
		H(f_{\tau_2}(\alpha\tilde{\pi}))=n^2H(\alpha)+ H(f_{\tau_2}(\tilde{\pi}))=-n_2nt \pm n+n_2nt=\pm n. 
	\end{align*}
	It follows from Lemma \ref{Lem:homology Fpi} that $ F_{\alpha\tilde{\pi}}\simeq  S^{2k-1}$, since it is a simply connected homology $(2k-1)$-sphere. This implies that $X$ is the total space of an $S^{2k-1}$ fibration
	over $S^{2k}$ as required.
	
	\qquad
	
\end{proof}

In the following we give a lemma for Euler class  of vector bundles over $S^{2k}$.
\begin{lemma}\label{lem:Euler class}
	Let $n$ be any integer,  then any $n$ multiple (resp. $2n$ multiple) of a generator of $H^{2k}(S^{2k})$ can be realized as the Euler class of a $2k$ rank vector bundle over $S^{2k}$ for $k=2,4$ (resp. $k\neq 2,4$). 
\end{lemma}
\begin{proof}
	Let $\xi_{k}: \mathbb{R}^{2k}\rightarrow X\rightarrow S^{2k}$ be the rank $2k$ real vector bundle over $S^{2k}$ which is the Hopf line bundle  for $k=2,4$ and the unit tangent bundle for $k\neq 2,4$.  The Euler class $e(\xi_{k})=\bar\iota_{2k}$ or $2\bar\iota_{2k}$ according as $k=2,4$ or not, where $\bar \iota_{2k}\in H^{2k}(S^{2k})$ is a generator. For the integer $n$, the pullback  bundle $[n]^{\ast}\xi_{k}$ along the map $[n]:S^{2k}\rightarrow S^{2k}$ of degree $n$ satisfies the condition of this lemma.
\end{proof}

\begin{lemma}\label{lem:homo bar H}
 The map	$\Lambda$ in (\ref{def:bar H}) is a homomorphism. Moreover, $\Lambda$ is an epimorphism for $k=2,4$ or $k\neq 2,4$ and $2|n$. 
\end{lemma}
\begin{proof}
	Let $f_t\in K_k^n$, $t=1,2,3$, such that $\tilde f=f_1+f_2+f_3=0$. Let $\bar f=(f_1, f_2, f_3): 	\bigvee_{t=1}^3 S^{4k-2}\rightarrow \Pn$ be the map such that $f\fhe j_t=f_t$ for $t=1,2,3$ where 	$j_t:X_t\hookrightarrow \bigvee_{t=1}^{s} X_t$ is the  canonical inclusion of the $t$-th wedge summand. We have the following homotopy commutative diagram
	\begin{align}
		\xymatrix{
			& &  \Pn\ar[r]^{p_{2k}} &S^{2k}\ar[r]^-{[n]}  &  S^{2k}\\
			\bigvee_{t=1}^3 S^{4k-2} \ar[rr]^-{\bar f=(f_1,f_2 ,f_3)}  & &  \Pn\ar[r]\ar@{=}[u]  &X_{\vee}\ar[u]^{\pi}\ar[r]^-{p_{X_{\vee}}}  & 	\bigvee_{t=1}^3 S^{4k-1}\ar[u]^{\bar f(\pi)}\\
			S^{4k-2}\ar[u]^{\Delta} \ar[rr]^-{\tilde{f}=f_1+f_2+f_3}  & & \Pn\ar@{=}[u] \ar[r]^{i_{X_{+}}} & X_{+}\ar[u]^{\Delta_X}\ar[r]^-{p_{X_{+}}}  & 	S^{4k-1}\ar[u]^{\Delta} \ar@/_3pc/[uu]_{\bar f_{1}(\pi)+\bar f_{2}(\pi)+\bar f_{3}(\pi)}
		} \label{diam: VS^{4k-2}}
	\end{align}
	where the middle and bottom rows are cofibration sequences, $\pi$ is an extension of $p_{2k}$ which exists since $p_{2k}\fhe \bar f\simeq \ast$. Denote $\bar f(\pi)\fhe j_t$ by $\bar f_{t}(\pi)$, then
	\begin{align*}
		\bar f(\pi)=(\bar f_{1}(\pi), \bar f_{2}(\pi),\bar f_{3}(\pi))~\text{and}~\bar f(\pi)\fhe \Delta=\bar f_{1}(\pi)+\bar f_{2}(\pi)+\bar f_{3}(\pi).
	\end{align*}
	Clearly, $\pi\fhe \Delta_X$ is an extension of $p_{2k}$ via $i_{X_{+}}$ and   $\tilde{f}(\pi\fhe \Delta_X)$ can chosen as $\bar f_{1}(\pi)+\bar f_{2}(\pi)+\bar f_{3}(\pi)$. Since  $\tilde{f}=f_1+f_2+f_3=0$, by the following homotopy commutative diagram
	\begin{align}
		\xymatrix{
			& &  \Pn\ar[r]^{p_{2k}} &S^{2k}\ar[r]^-{[n]}  &  S^{2k}\\
			S^{4k-2} \ar[rr]^-{\tilde{f}=0}  & &  \Pn\ar[r]^-{i_{X_{+}}\simeq j_1}\ar@{=}[u]  &X_{+}=\Pn\vee S^{4k-1}\ar[u]^{p_{2k}\fhe q_1}\ar[r]^-{p_{X_{+}}}  & S^{4k-1}\ar[u]^{0}
		}
	\end{align}
	$p_{2k}\fhe q_1$ is also an extension of $p_{2k}$ via $i_{X_{+}}$ and $\tilde{f}(p_{2k}\fhe q_1)$ can be choosen as $0$, where $q_1$ is the  canonical projection to the first wedge summand. Hence 
	\begin{align*}
		&H(\tilde{f}(\pi\fhe \Delta_X))\equiv H(\tilde{f}(p_{2k}\fhe q_1))=H(0)=0~ \text{(mod $nn_2$), i.e., }\\
		&H(\bar f_{1}(\pi))+	H(\bar f_{2}(\pi))+	H(\bar f_{3}(\pi)) =	H(\bar f_{1}(\pi)+\bar f_{2}(\pi)+\bar f_{3}(\pi))\equiv 0 ~\text{(mod $nn_2$)}. 
	\end{align*}
	By the following homotopy commutative diagrams for $t=1,2,3$
	\begin{align}
		\xymatrix{
			& &  \Pn\ar[r]^{p_{2k}} &S^{2k}\ar[r]^-{[n]}  &  S^{2k}\\
			\bigvee_{t=1}^3 S^{4k-2} \ar[rr]^-{\bar f=(f_1,f_2 ,f_3)}  & &  \Pn\ar[r]\ar@{=}[u]  &X_{\vee}\ar[u]^{\pi}\ar[r]^-{p_{X'}}  & 	\bigvee_{t=1}^3 S^{4k-1}\ar[u]^{\bar f(\pi)}\\
			S^{4k-2}\ar[u]^{j_t} \ar[rr]^-{f_t}  & & \Pn\ar@{=}[u] \ar[r]^-{i_{X_t}} & X_{t}\ar[u]^{\tilde j_{t}}\ar[r]^-{p_{X'}}  & 	S^{4k-1}\ar[u]^{j_t} \ar@/_3pc/[uu]_{\bar f_{t}(\pi)}
		} 
	\end{align}
	where the middle and bottom rows are cofibration sequences, we get $\pi_{t}:=\pi\fhe \tilde{j}_t$ is an extension of $p_{2k}$ via $i_{X_t}$, where 
	$X_{t}=\Pn\cup_{f_t}e^{4k-1}$ and $f_t(\pi_t)=\bar f_{t}(\pi)$. Thus 
	\begin{align*}
		H(f_1(\pi_1))+	H(f_2(\pi_2))+	H(f_3(\pi_3)) \equiv 0 ~\text{(mod $nn_2$)}.
	\end{align*}
We  get $ \Lambda(f_1)+\Lambda(f_2)+\Lambda(f_3)=0$, which implies that $\Lambda$ is a homomorphism. 

By Lemma \ref{lem:Euler class}, if $k=2,4$ or $k\neq 2,4$ and $2|n$, then there is an $S^{2k-1}$-sphere bundle over $S^{2k}$ with Euler number $n$. From Theorem \ref{thm: condition on Hf}, its total space  is $X=\Pn\cup_{f_0}e^{4k-1}$ for some $f_0\in K_{k}^n$ with  $\Lambda(f_0)=\pm \tau^2$ and $\tau\in \Z^{\ast}_{n_2}$. Hence $\Lambda$ is an epimorphism.
\end{proof}

\begin{lemma}\label{lem:H(iv(pi))}
	$\Lambda (i_{2k-1}\fhe \nu)=0$ for any $\nu\in \pi_{4k-2}(S^{2k-1})$.
\end{lemma}
\begin{proof}
	We have the following commutative diagram of cofibration sequences
	\begin{align*}
		\small{\xymatrix{
				S^{4k-2}\ar[d]^{\nu}\ar[r]^-{i_{2k-1}\fhe \nu}&	P^{2k}(n)\ar@{=}[d] \ar[r]^-{i_{X}}&X\ar[r]^-{p_{X_1}}\ar[d]^{\pi}&S^{4k-1}\ar[d]^{f(\pi)=\Sigma\nu}\\
				S^{2k-1}\ar[r]^-{i_{2k-1}}	&	P^{2k}(n) \ar[r]^-{p_{2k}}& S^{2k}\ar[r]^{[n]} & S^{2k}	}}
	\end{align*}
	Hence	this lemma is easily obtained by $H(f(\pi))=H(\Sigma\nu)=0$. 
\end{proof}

 \begin{proof}[Proof of Theorem \ref{thm: X fibration}]
	~~By Lemmas \ref{lem: j(K)}, \ref{lem:homo bar H} and \ref{lem:H(iv(pi))}, we get the following isomorphism from the exact sequence (\ref{exact: K})
	\begin{align*}
\bar \Lambda:	j_{\ast}(K_{k}^{n})\cong \frac{K_{k}^{n}}{i_{2k-1\ast}\pi_{4k-2}(S^{2k-1})}\xrightarrow{\cong } \Z_{n_2}. 
	\end{align*}
	 If $k=2,4$ or $k\neq 2,4$ and $2|n$, then there is an $S^{2k-1}$-sphere bundle over $S^{2k}$ by Lemma \ref{lem:Euler class}. By  \cite[(5.1)]{JamesII}, its total space has homotopy type $X=\Pn\cup_{f}e^{4k-1}$ with $j_{\ast}(f)=[X_{2k}, i_{2k-1}]_r$. Hence from Theorem \ref{thm: condition on Hf},  $\bar \Lambda([X_{2k}, i_{2k-1}]_r)=\pm\tau_0^2$ with $\tau_0\in\Z_{n_2}^{\ast}$. 
	 
	 The isomorphism $\bar \Lambda$ implies the  equivalence of Theorem \ref{thm: X fibration} and Theorem \ref{thm: condition on Hf}. 
\end{proof}

\begin{proof}[Proof of Corollary \ref{corollary: Gener Sasao}]
	By Theorem \ref{thm: X fibration}, $2|n$ if $k\neq 2,4$ and	$j_{\ast}(f)=\pm\tau^{2}[X_{2k},\iota_{2k-1}]_r$, with $(\tau, n)=1$ which implies  $(\tau, n_2)=1$. Let $\tau_1$ be the integer such that $\tau_1\tau\equiv 1$ (mod $n_2$).   
	Then $g=\tau_1\iota_{P}\in Aut(\Pn)$. Let $\bar g: (\Pn, S^{2k-1})\rightarrow (\Pn, S^{2k-1})$ be the map induced by $g$. 
	\begin{align*}
		j_{\ast}(g\fhe f)=	&\bar g_{\ast}j_{\ast}(f)=\pm\tau^{2} \bar g_{\ast}[X_{2k},\iota_{2k-1}]_r=\pm\tau^{2}[\bar g_{\ast}X_{2k}, g_{\ast} \iota_{2k-1}]_r	\\
		=&\pm\tau^{2}\tau_1^2[X_{2k},\iota_{2k-1}]_r=\pm [X_{2k},\iota_{2k-1}]_r. 
	\end{align*}
	Take $\tilde{f}=g\fhe f$ or $-(g\fhe f)$ such that $j_{\ast}(\tilde{f})= [X_{2k},\iota_{2k-1}]_r$.  Then by Proposition \ref{Prop: hty equi}, 
	\begin{align*}
		X\simeq \Pn\cup_{\tilde{f}} e^{4k-1}. 
	\end{align*}
\end{proof}

\section{Proof of Theorem \ref{thm: x1=X2}}
\label{sec:proof of thm X1=X2 }

 In the last section,  we complete the proof of Theorem \ref{thm: x1=X2}. 
 
 Let $\pi_i$ and $\mu_i:=f_i(\pi_i)$ be the extensions defined in diagram (\ref{digm:fa[pi]}) with $a=1$ for  $X_i$ ($i=1,2$) in Theorem \ref{thm: x1=X2}. Clearly 
 \begin{align*}
 	i_{2k}\fhe \mu_i=\Sigma f_i ~~ (i=1,2).
 \end{align*}
 \begin{lemma}\label{lem: X1=X2 on mu}
 	Let $X_1$ and $X_2$ be given in Theorem \ref{thm: x1=X2} with $k\neq 2,4$ and $2|n$. Then $X_1\simeq X_2$ if and only if there is an integer  $t$ such that 
 	\begin{align*}
 		\text{(i)}~[t]\fhe \mu_1\equiv \pm \mu_2~\text{(mod $Ker(i_{2k\ast})$)};~~~~~\text{(ii)}~t^2\equiv \pm 1 ~\text{(mod $2n$)}.
 	\end{align*}
 \end{lemma}
 \begin{proof}	``$\Rightarrow$"
 	\qquad
 	
 	Since $X_1\simeq X_2$, from Proposition \ref{Prop: hty equi}, there is an $g\in Aut(\Pn)$ such that $g\fhe f_1=\pm f_2$. Let the degree of $g$ is integer $t$, that is $g\fhe i_{2k-1}=ti_{2k-1}$. Clearly, $(t,n)=1$. 
 	
 	For $i=1,2$, by the definition of $X_i$, we get $j_{\ast}(f_i)=[X_{2k}, \iota_{2k-1}]_r$.
 	\begin{align*}
 		&\Sigma(g\fhe f_1)=\pm \Sigma f_2 \Rightarrow (\Sigma g)\fhe i_{2k}\fhe \mu_1=\pm i_{2k}\fhe \mu_2 \Rightarrow (t i_{2k})\fhe \mu_1= \pm i_{2k}\fhe \mu_2\\
 		\Rightarrow ~& i_{2k\ast}([t]\fhe \mu_1)=\pm \mu_2\Rightarrow i_{2k\ast}([t]\fhe \mu_1\pm \mu_2)=0 \Rightarrow [t]\fhe \mu_1\equiv \pm \mu_2~\text{(mod $Ker(i_{2k\ast})$)}.
 	\end{align*}
 	Let $\bar g$ be the self-map of the CW-pair $(\Pn, S^{2k-1})$ which is induced by $g$. Then $\bar g_{\ast}([X_{2k}, \iota_{2k-1}]_r)=t^2[X_{2k}, \iota_{2k-1}]_r$, which implies 
 	\begin{align*}
 		\bar{g}_{\ast}(j_{\ast}(f_1))=j_{\ast}g_{\ast}(f_1)=j_{\ast}(\pm f_2), ~\text{i.e.,}~t^2[X_{2k}, \iota_{2k-1}]_r=\pm[X_{2k}, \iota_{2k-1}]_r. 
 	\end{align*}
 	Hence, $t^2\equiv \pm 1$ (mod $2n$). 
 	
 	\qquad
 	``$\Leftarrow$"
 	\qquad
 	Conversely, let $t$ be the integer satisfying conditions $(i)$ and $(ii)$ in Lemma \ref{lem: X1=X2 on mu}. Let $g=t\iota_P+\epsilon i_{2k-1}\eta_{2k-1}p_{2k}\in Aut(\Pn)$, $\epsilon\in\{0,1\}$. Composing  $i_{2k}$ on the left side of both sides of equation $(i)$ in Lemma \ref{lem: X1=X2 on mu}, we get $t\Sigma f_1=\pm \Sigma f_2$ which implies 
 	\begin{align}
 		\Sigma(g\fhe f_1)=\Sigma (t\iota_P)\fhe f_1 =\pm \Sigma f_2 \text{~since~}  p_{2k} f_1=0.\label{equ:Egf_1=Ef_2}
 	\end{align}
 	Moreover, by $(i)$ in Lemma \ref{lem: X1=X2 on mu},
 	\begin{align*}
 		&j_{\ast}(g\fhe f_1)=\bar g_{\ast}j_{\ast}(f_1)=\bar g_{\ast}[X_{2k}, \iota_{2k-1}]_r=\pm t^2[X_{2k}, \iota_{2k-1}]_r=\pm [X_{2k}, \iota_{2k-1}]_r.
 	\end{align*}
 	By $j_{\ast}(f_2)=[X_{2k}, \iota_{2k-1}]_r$, we get $g\fhe f_1\pm f_2\in Ker(j_{\ast})=Im(i_{2k-1\ast})$, i.e, 
 	\begin{align*}
 		g\fhe f_1\pm f_2=i_{2k-1}\fhe \xi~\text{for some}~\xi\in \pi_{4k-2}(S^{2k-1}).
 	\end{align*}
 	By (\ref{equ:Egf_1=Ef_2}), $\Sigma(i_{2k-1}\fhe \xi)=0$. By Corollary 4.3 of  \cite{James On sph bundle}, there is an $\xi'\in\pi_{4k-2}(S^{2k-1})$ such that $i_{2k-1}\fhe \xi=i_{2k-1}\fhe \xi'$ and $\Sigma \xi'=0$. By  \cite[Corollary (5.3)]{Freudenthal Thm}
 	\begin{align*}
 		\xi'=\epsilon[\eta_{2k-1},\iota_{2k-1}]~\text{for some integer}~ \epsilon\in\{0,1\}. 
 	\end{align*}
 	From Lemma \ref{lem:exist g'}, there exists a $g'\in Aut(\Pn)$ such that 
 	Let $g'=g+\xi'$. We have 
 	$g'\fhe f_1=\pm f_2$. Hence  $X_1\simeq X_2$.  
 \end{proof}
 
 \begin{lemma}\label{lem: X1=X2 on rEmu=Emu}
 	Let $X_1$ and $X_2$ be given in Theorem \ref{thm: x1=X2} with $k\neq 2,4$ and $2|n$. Then $X_1\simeq X_2$ if and only if there is an integer  $t$ such that 
 	\begin{align*}
 		~t^2\equiv 1 ~\text{(mod $2n$) and ~} t\Sigma \mu_1\equiv\Sigma \mu_2 ~\text{(mod $n\pi_{4k}(S^{2k+1})$)}
 	\end{align*}
 \end{lemma}
 \begin{proof}	``$\Rightarrow$"
 	\qquad
 	
 	If  $X_1\simeq X_2$, then $(i)$ and $(ii)$ in Lemma \ref{lem: X1=X2 on mu} are satisfied. Since $n$ is even,  $t^2\equiv -1$ (mod $2n$) has no solution. By $(ii)$ in Lemma \ref{lem: X1=X2 on mu}, $t^2\equiv 1$ (mod $2n$). Therefore, $(i)$ in Lemma \ref{lem: X1=X2 on mu} becomes $[t]\fhe \mu_1\equiv  \mu_2~\text{(mod $Ker(i_{2k\ast})$)}$. By suspension, it follows \cite[Theorem 4.2]{James On sph bundle} that
 	\begin{align*}
 		t\Sigma\mu_1\equiv \Sigma\mu_2 ~\text{(mod $n\pi_{4k}(S^{2k+1})$)}. 
 	\end{align*}
 	
 	``$\Leftarrow$"
 	
 	Conversely, suppose that there is an integer $t$ such that $t^2\equiv 1$ (mod $2n$) and $r\Sigma \mu_1\equiv\Sigma \mu_2$  (mod $n\pi_{4k}(S^{2k+1})$). Clearly, $(ii)$ in Lemma \ref{lem: X1=X2 on mu} is satisfied. 
 	
 	By the EHP sequence (exact)
 	\begin{align}
 		\pi_{4k-2}(S^{2k-1})\xrightarrow{\Sigma} \pi_{4k-1}(S^{2k})\xrightarrow{H_2} \pi_{4k-1}(S^{4k-1})\cong \Z, \nonumber
 	\end{align}
 	the restriction of $H_2$ on the torsion subgroup $T$ is trivial, where $T$ comes from Remark \ref{rem:pi_{4k-1}(S^{2k})}. 
 	
 	The suspension $\Sigma: \pi_{4k-1}(S^{2k})\rightarrow  \pi_{4k}(S^{2k+1})$ is epimorphism, and its kernel is $\Z\{\omega_{2k}\}$, where $\omega_{2k}:=[\iota_{2k},\iota_{2k}]$ \cite[(2.11) and page 35]{Toda}  and $H_{2}(\omega_{2k})=\pm 2\iota_{4k-1}$ \cite[Proposition 2.7]{Toda} . 
 	By  Proposition \ref{prop2} and Theorem \ref{thm: condition on Hf},  $H_{2}(\mu_1)=\lambda_1n\iota_{4k-1}$ for some integer $\lambda_1$ which is  coprime with $n$. Thus
 	$\mu_1-\frac{\pm\lambda_1n}{2}\omega_{2k}\in Ker(\Sigma)$. Replacing $\lambda_1$ by $-\lambda_1$ if necessary, we get
 	\begin{align*}
 		\mu_1=\frac{\lambda_1n}{2}\omega_{2k}+\nu_1\equiv \frac{n}{2}\omega_{2k}+\nu_1~\text{(mod $n\pi_{4k-1}(S^{2k})$)}  ~\text{for some}~\nu_1\in T.
 	\end{align*}
 	Similarly  there are integer $\lambda_2$ with $(\lambda_2, n)=1$ and  torsion element~$\nu_2$ such that $\mu_2\equiv \frac{n}{2}\omega_{2k}+\nu_2~\text{(mod $n\pi_{4k-1}(S^{2k})$)}$. Hence 
 	\begin{align*}
 		t \mu_1-\mu_2\equiv \frac{(t-1)n}{2}\omega_{2k}+\nu_3~\text{(mod $n\pi_{4k-1}(S^{2k})$)} ~\text{for some}~\nu_3\in T. 
 	\end{align*}
 	$t\Sigma \mu_1\equiv\Sigma \mu_2$  (mod $n\pi_{4k}(S^{2k+1})$) implies 
 	$\Sigma(t \mu_1-\mu_2)\equiv 0$ (mod $n\pi_{4k}(S^{2k+1})$). So 
 	$\Sigma\nu_3\equiv 0$ (mod $n\pi_{4k}(S^{2k+1})$). As mentioned in page 35 of \cite{Toda}, $\Sigma: \Sigma\pi_{4k-2}(S^{2k-1})\rightarrow \pi_{4k}(S^{2k+1})$ is injective.  Thus
 	\begin{align*}
 		&t \mu_1-\mu_2\equiv \frac{(t-1)n}{2}\omega_{2k}~\text{(mod $n\pi_{4k-1}(S^{2k})$)}\\ 
 		\Rightarrow~	& [t]\fhe\mu_1-\mu_2=t \mu_1-\mu_2+\frac{t(t-1)}{2}w_{2k}H_2(\mu_1)\\
 		\equiv &\frac{(t-1)n}{2}\omega_{2k}+\frac{t(t-1)}{2}\lambda_1nw_{2k}
 		~\text{(mod $n\pi_{4k-1}(S^{2k})$)}\\
 		\equiv &0 ~\text{(mod $n\pi_{4k-1}(S^{2k})$), since~$t$ is odd} .
 	\end{align*}
 	It follows \cite[Theorem 4.2]{James On sph bundle} that $(i)$ in Lemma \ref{lem: X1=X2 on mu} holds. 
 \end{proof}

 \begin{proof}[Proof of Theorem \ref{thm: x1=X2}]
 	If $t\Sigma^{\infty} f_1=\Sigma^{\infty}f_2$, with $t^2\equiv 1$ (mod $2n$), then $i_{2k\ast}(t\Sigma^{\infty} \mu_1)=i_{2k\ast}(\Sigma^{\infty} \mu_2)$. Hence
 	\begin{align*}
 		t\Sigma^{\infty} \mu_1-\Sigma^{\infty} \mu_2\in Ker(i_{2k\ast}:\pi^{s}_{4k-1}(S^{2k})\rightarrow \pi^{s}_{4k-1}(P^{2k+1}(n)))=n\pi^s_{4k-1}(S^{2k}). 
 	\end{align*} 
 	Since $\Sigma\mu_1, \Sigma\mu_2\in \pi_{4k}(S^{2k+1})$ are in stable range, we get 
 	$i_{2k+1\ast}(t\Sigma\mu_1)=i_{2k+1\ast}(\Sigma \mu_2)$, which implies  that 
 	$t\Sigma\mu_1- \Sigma\mu_2\in Ker(i_{2k+1\ast})=n\pi_{4k}(S^{2k+1})$, from Lemma \ref{lem: X1=X2 on rEmu=Emu}, $X_1\simeq X_2$. The proof of sufficiency in Theorem \ref{thm: x1=X2} is obvious.
 \end{proof}

 \qquad
 
 \qquad
 
\noindent
{\bf Acknowledgement.}
The first author was partially supported by National Natural Science Foundation of China (Grant No. 12571075); the second author was partially supported by National Natural Science Foundation of China (Grant No. 11971461 and  12571075).

\bibliographystyle{amsplain}

\end{document}